\documentclass{amsart}
\usepackage[colorlinks=true,pagebackref,hyperindex,citecolor=blue,linkcolor=blue]{hyperref}
\usepackage[top=1in, bottom=1in, left=1.1in, right=1.1in]{geometry}
\usepackage{amsmath}
\usepackage{amsfonts}
\usepackage{amssymb}
\usepackage{amsthm}
\usepackage{stmaryrd}
\usepackage{braket}
\usepackage[all]{xy}
\usepackage{mathrsfs}
\usepackage{enumitem} 
\usepackage{color}
\usepackage{tikz}
\usepackage{nicefrac, xfrac}
\usepackage[numbers]{natbib}
\usepackage{comment} 
\usepackage[normalem]{ulem}

\usepackage[T1]{fontenc}
\usepackage[utf8]{inputenc}

\newtheorem{theorem}{Theorem}[section]
\newtheorem{corollary}[theorem]{Corollary}

\newtheorem{lemma}[theorem]{Lemma}
\newtheorem{proposition}[theorem]{Proposition}

\newtheorem*{theoremx}{Theorem}

\theoremstyle{definition}

\newtheorem{example}[theorem]{Example}
\newtheorem{remark}[theorem]{Remark}
\numberwithin{equation}{section}

\newcommand{\ZZ}{\mathbb{Z}}
\newcommand{\PP}{\mathbb{P}}

\newcommand{\CC}{\mathbb{C}}

\newcommand{\ctM}{\Theta_{M}}
\newcommand{\mcI}{\mathcal{I}}

\newcommand{\mcO}{\mathcal{O}}

\newcommand{\mcH}{\mathcal{H}}

\newcommand{\mcE}{\mathcal{E}}
\newcommand{\mcF}{\mathscr{F}}

\makeatletter
\def\@tocline#1#2#3#4#5#6#7{\relax
  \ifnum #1>\c@tocdepth
  \else
    \par \addpenalty\@secpenalty\addvspace{#2}%
    \begingroup \hyphenpenalty\@M
    \@ifempty{#4}{%
      \@tempdima\csname r@tocindent\number#1\endcsname\relax
    }{%
      \@tempdima#4\relax
    }%
    \parindent\z@ \leftskip#3\relax \advance\leftskip\@tempdima\relax
    \rightskip\@pnumwidth plus4em \parfillskip-\@pnumwidth
    #5\leavevmode\hskip-\@tempdima
      \ifcase #1
       \or\or \hskip 1.9em \or \hskip 2em \else \hskip 3em \fi%
      #6\nobreak\relax
    \dotfill\hbox to\@pnumwidth{\@tocpagenum{#7}}\par
    \nobreak
    \endgroup
  \fi}
\makeatother

\begin{document}

\title[Foliations on Surfaces]{Effective Results for Foliations on Smooth Projective Complete Intersection Surfaces}

\author[J. Olivares]{Jorge Olivares}
\address{Centro de Investigaci\'on en Matem\'aticas, A.C.
A.P. 402, Guanajuato 36000, México} 
\email{olivares@cimat.mx}

\author[D. Posada-Buriticá]{Daniel Posada-Buriticá{$^1$}}
\address{Centro de Investigaci\'on en Matem\'aticas, A.C.
A.P. 402, Guanajuato 36000, México} 
\email{daniel.posada@cimat.mx}

\thanks{{$^1$}This author was partially supported by CONACYT-SECIHTI Scholarship 842809.}

\subjclass[2020]{Primary  32S65; Secondary  32L10.}

\keywords{Foliations on surfaces; singularities; Poincaré's problem.}

\begin{abstract}
We study holomorphic foliations on projective spaces that leave smooth 
projective 
complete intersection surfaces $M$ invariant. We determine precisely for which degrees such foliations on $M$ exist. As a consequence, we obtain new bounds for the classical Poincaré problem for smooth projective complete intersection surfaces and prove that previously known bounds for smooth hypersurfaces in $\PP^3$ are optimal. Furthermore, for a foliation $[s]$ on $M$ with isolated singularities and for degrees beyond an explicit bound that we provide, 
we show that a section $s'$ has singular scheme containing that of $s$ if and only if $s'=\phi(s)$ for some global endomorphism $\phi$ of the tangent bundle of $M$.
\end{abstract}

\maketitle

\section*{Introduction}

Holomorphic foliations on compact connected complex manifolds constitute a classical topic in Algebraic Geometry, lying at the intersection of complex geometry, dynamical systems, and the theory of algebraic differential equations. Besides their intrinsic geometric interest, they naturally arise in connection with several classical problems concerning invariant subvarieties, singularities, and vector fields on complex manifolds.

Two fundamental problems
arise naturally when studying holomorphic foliations on a fixed compact connected complex manifold $X$.

\textbf{I)} To decide which line bundles occur as tangent sheaves of holomorphic 
foliations on $X$.

\textbf{II)} Given a holomorphic foliation on $X$ with isolated singularities, to decide the existence of other foliations that share its
singularities with the given one.

The first problem is essentially cohomological, since foliations with tangent sheaf 
$\mathscr{L}$ correspond to nonzero global sections of 
$H^0(X,\Theta_X \otimes\mathscr{L}^\vee)$, modulo multiplication by a nonzero scalar. 
In other words, the space $\mathscr{F}ol(\mathscr{L},X)$ of foliations in $X$ with tangent sheaf $\mathscr{L}$ is precisely $\PP H^0(X,\Theta_X \otimes\mathscr{L}^\vee)$.
The second problem concerns the extent to which the geometry of the singular scheme determines the corresponding foliation. More precisely, if 
$s\in H^0(X,\Theta_X\otimes\mathscr L^\vee)$ is non-zero
and has isolated singularities, we denote by $Z_s$ its singular 
scheme and say that the foliation defined by $s$ is \emph{uniquely determined by its singular scheme} if every section $s'$ whose singular scheme  $ Z_{s'} $ is equal to
$Z_s$ is, necessarily, a scalar multiple of $s$. A detailed discussion of these notions
is recalled in Section \ref{Bckgnd}.

For complex projective spaces, 
both problems are solved:
Indeed, the Euler sequence shows that all
foliations on $\PP^n$ have tangent sheaf equal to
$\mathcal O_{\PP^n}(1-d)$, 
for $d\ge0$ (see Subsection~\ref{Subsec:FolinPn}), 
and moreover,
a foliation on $\PP^n$ with isolated singularities is uniquely determined by
its singular scheme, whenever $d\ge2$ (see \cite{CampilloOlivares}). This result
was previously proved in \cite{GMKempf} for reduced singularities and later in
\cite{Polarity} for the case of $\PP^2$.
Both problems have been also solved for foliations on Hirzebruch surfaces in \cite{Hirzebruch}.

The second problem has also been investigated in several other settings: Sufficient conditions ensuring that a foliation or a higher-rank distribution is determined by its singular scheme have been established for codimension one foliations and higher-rank distributions on projective spaces (see \cite{G-P,C-F-N-V,A-C}).

Both problems were 
solved in the special case where $X$ is a projective complete intersection K3 surface 
in \cite{OlivaresPosada}. The results obtained in that setting are recovered as 
particular cases of the more general framework developed in the present work.

The first problem is closely related to another classical question in the theory of holomorphic foliations, namely the \emph{Poincaré problem}. In its modern formulation, this problem seeks
for bounds on the degree of smooth projective subvarieties invariant under a foliation of prescribed degree on projective space. Therefore, any necessary condition for the existence of invariant subvarieties immediately yields a solution to the Poincaré problem for the corresponding class of subvarieties. Explicit bounds are currently known in the cases of smooth projective hypersurfaces and smooth projective complete intersections of odd dimension, see \cite{MarcioInv,MarcioAnn}.

Motivated by these problems, in this paper we study both problems for holomorphic foliations on smooth projective complete intersection surfaces. Let $X=V(f_1,\ldots,f_c)\overset{i}{\hookrightarrow}\PP^n$ be a smooth complete intersection surface of type $(d_1,\ldots,d_c)$. Throughout the paper we will consider foliations on $ X $ with tangent sheaf 
$ \mathscr{ L }_d = i^{\ast}\mathcal{O}_{\PP^n}(1-d) $, 
where the dimension $ n $ depends on the complete intersection type of $ X $.
This means that we shall consider foliations 
$ \mathscr{F} = [ s ]$ on $ \PP^n $, with $ s \in H^0(\PP^n,\Theta_{\PP^n}\otimes\mathcal{O}_{\PP^n}(d-1))$,
such that its restriction $ i^{\ast} s $ to $ X $ factors through the tangent sheaf
$ \Theta_X $ of $ X $:
$ i^{\ast} s \in H^0( X, \Theta_X \otimes \mathscr{L}_d^{\vee}) $.
That is, foliations $ \mathscr{F} $ which leave $ X $ invariant.
Geometrically, these correspond to polynomial vector fields on $\PP^n$ tangent to $X$ at every point.

Our main results are summarized in the following statement:
\begin{theoremx}
Let $ X=V(f_1, \ldots, f_c) \overset{ i }{\hookrightarrow} \PP^n $ be a smooth complete
intersection surface of type $(d_1, \ldots, d_c)$ in $\PP^n$, with 
$2 \leq d_1 \leq \cdots \leq d_c$, and consider the invertible sheaves 
$\mathscr{L}_d=i^{\ast}\mathcal{O}_{\PP^n}(1-d)$ on $X$, with 
$ d \geq 0 $. Then,
\begin{itemize}
\item[ \textbf{A} ] \label{Theorem A}
Foliations on $ X $ with tangent sheaf $\mathscr{L}_d $ exist only for $ d \geq \sum_{i=1}^c d_i -n+2$.
\item[ \textbf{B} ]  \label{Theorem B}
Let $d > \sum_{d=1}^c d_i -n+d_c$ and
let $ s \in H^0(X,\Theta_X \otimes \mathscr{L}_d^{\vee}) $ be a nonzero
section with isolated singularities. Then, a nonzero section 
$ s' \in H^0(X,\Theta_X \otimes \mathscr{L}_d^{\vee})$ satisfies that 
$ Z_{s'} \supset Z_s $ if and only if $s'=\phi(s)$, for some global endomorphism 
$\phi$ of $\Theta_X$. In particular, if $\Theta_X$ is \textit{simple} (which by 
definition means that every global endomorphism of $\Theta_X$ is an 
scalar multiple of the identity) then there is a unique foliation $ [s] $ having 
$ Z_s $ as singular scheme.

\end{itemize}
\end{theoremx}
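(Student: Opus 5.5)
For \textbf{A}, write $\Theta_X(k)$ for $\Theta_X\otimes i^\ast\mathcal O_{\PP^n}(k)$, so foliations with tangent sheaf $\mathscr L_d$ are nonzero elements of $H^0(X,\Theta_X(d-1))$. The plan is to show $H^0(X,\Theta_X(d-1))=0$ for $d< \sum d_i-n+2$. By adjunction $K_X=\mathcal O_X(\sum d_i-n-1)$, and since $X$ is a surface, $\Theta_X\cong\Omega^1_X\otimes K_X^\vee$. Hence
\[
H^0(X,\Theta_X(d-1))\cong H^0\bigl(X,\Omega^1_X(d-1-\textstyle\sum d_i+n+1)\bigr)=H^0(X,\Omega^1_X(t)),\qquad t=d-\textstyle\sum d_i+n.
\]
The condition $d<\sum d_i-n+2$ means $t\le 1$, so it suffices to prove $H^0(X,\Omega^1_X(t))=0$ for $t\le 1$. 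I would do this by the standard vanishing for twisted differentials on smooth complete intersections. Use the conormal sequence
\[
0\to\bigoplus_j\mathcal O_X(t-d_j)\to\Omega^1_{\PP^n}|_X(t)\to\Omega^1_X(t)\to 0
\]
and the restricted Euler sequence $0\to\Omega^1_{\PP^n}|_X(t)\to\mathcal O_X(t-1)^{n+1}\to\mathcal O_X(t)\to0$. The latter gives $H^0(\Omega^1_{\PP^n}|_X(t))=0$ for $t\le1$: for $t=1$ the map $H^0(\mathcal O_X)^{n+1}\to H^0(\mathcal O_X(1))$ is injective because $X$ is nondegenerate. It remains to show $H^1(X,\mathcal O_X(t-d_j))\to H^1(\Omega^1_{\PP^n}|_X(t))$ is injective. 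Since $X$ is an arithmetically Cohen--Macaulay surface, $H^1(\mathcal O_X(m))=0$ for all $m$, so this holds trivially. Thus vanishing of $H^0(\Omega^1_X(t))$ follows.

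For \textbf{B}, I would follow the Koszul/Gomez-Mont--Kempf strategy used in \cite{CampilloOlivares,OlivaresPosada}. A section $s$ with isolated singularities gives the Koszul complex
\[
0\to \mathscr L_d^{2}\otimes K_X\to \mathscr L_d\otimes\Omega^1_X\otimes K_X\ \text{(rank-2 twist)}\to\mathcal I_{Z_s}\to0.
\]
Written more naturally, this is $0\to \mathcal O_X(-c_1)\to \Theta_X^\vee\otimes\mathscr L_d\cong \Theta_X\otimes K_X\otimes\mathscr L_d\to\mathcal I_{Z_s}\to 0$, with $c_1=c_1(\Theta_X\otimes\mathscr L_d^\vee)$. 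The condition $Z_{s'}\supset Z_s$ means contraction with $s'$ sends the ideal of $Z_{s'}$ into $\mathcal I_{Z_s}$. Equivalently, $s'$ composed with the dual lies in $\mathcal I_{Z_s}\otimes\mathcal O(c_1)$ after wedging: $s\wedge s'\in H^0(X,\mathcal I_{Z_s}\otimes\det(\Theta_X\otimes\mathscr L_d^\vee))$. Tensor the Koszul resolution by $\Theta_X\otimes\mathscr L_d^\vee$ and take cohomology. If $H^1(X,\Theta_X\otimes\Theta_X^\vee\otimes\ldots)$-type obstructions, namely $H^1(X,\Theta_X\otimes\mathcal O(-c_1)\otimes\mathscr L_d^\vee\otimes\det)=H^1(X,\Omega^1_X(\text{twist}))$, vanish, then every $s'$ vanishing on $Z_s$ in the appropriate sense lifts to $\mathrm{End}(\Theta_X)$, giving $s'=\phi(s)$. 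Concretely, the exact sequence
\[
0\to \mathcal Hom(\Theta_X,\Theta_X)\otimes\ldots\ \xrightarrow{\ \cdot s\ }\ \Theta_X\otimes\mathscr L_d^\vee\to \Theta_X\otimes\mathscr L_d^\vee\otimes\mathcal O_{Z_s}\to0
\]
requires the vanishing of an $H^1$ of a twist of $\Theta_X\otimes \mathscr L_d\otimes\det(\Theta_X\otimes\mathscr L_d^\vee)^{\vee}$, which by Serre duality is $H^1(X,\Omega^1_X(m))$ with $m=d-\sum d_i+n-1$ up to sign conventions. The converse direction is immediate, since $\phi(s)$ vanishes wherever $s$ does.

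The main obstacle is this $H^1$ vanishing. $H^1(\Omega^1_X(m))$ vanishes for $m\ne0$ only in a range. From the conormal sequence one needs $H^2(\mathcal O_X(m-d_j))\to H^2(\Omega^1_{\PP^n}|_X(m))$ injective, or $H^2(\mathcal O_X(m-d_j))=0$. By duality the latter is $H^0(\mathcal O_X(\sum d_i-n-1-m+d_j))=0$, which holds for all $j$ once $m>\sum d_i-n-1+d_c$. This is exactly how the threshold $d>\sum d_i-n+d_c$ should appear after matching twists. I would also use $H^1(\Omega^1_{\PP^n}|_X(m))=0$ for $m\neq0$, via Euler and the ACM property. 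The delicate part is carefully tracking the twists to land precisely on the stated bound.
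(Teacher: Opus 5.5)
Part \textbf{A} is correct, but it takes a different route from the paper. The paper computes $h^0(X,\Omega^1_X(t))$ exactly, where $t=d+n-\lambda$ and $\lambda=\sum_i d_i$, using the conormal sequence, the Koszul resolution of $\mcI_X$ and the Bott formulas, and then reads off where it vanishes. You prove only the vanishing for $t\le 1$, from the restricted Euler sequence, the nondegeneracy of $X$ and $H^1(X,\mcO_X(m))=0$. That is all \textbf{A} asserts. Your argument is shorter and uniform in $n$. It also covers $t=0$ ($d=\lambda-n$), which Theorem~\ref{Theorem:ExistenceSurfaces} leaves as a possible exception because $H^1(\Omega^1_{\PP^n})\neq 0$ breaks the exact sequence used there. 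What the paper's computation buys is the dimension formula and existence for $d\ge\lambda-n+2$.

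Part \textbf{B} follows the paper's strategy, but it has a gap. The twist bookkeeping, which you defer, goes wrong at the decisive step, so the stated bound is not established. Set $E=\Theta_X\otimes\mathscr L_d^\vee$. The Koszul complex is $0\to\det E^\vee\to E^\vee\to\mcI_{Z_s}\to 0$ with $E^\vee=\Omega^1_X\otimes\mathscr L_d$; your first display has a spurious $K_X$ in the middle term. Since $E\otimes\det E^\vee\cong E^\vee$ in rank two, tensoring with $E$ gives
\[
0\to\Omega^1_X\otimes\mathscr L_d\to\mathrm{End}(\Theta_X)\xrightarrow{\ \phi\mapsto\phi(s)\ }\Theta_X\otimes\mathscr L_d^\vee\otimes\mcI_{Z_s}\to 0 .
\]
The $H^0$ of the right-hand term is exactly the space of $s'$ with $Z_{s'}\supset Z_s$.

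Two of your intermediate claims are wrong:
\begin{itemize}
\item Your reformulation via $s\wedge s'\in H^0(\mcI_{Z_s}\otimes\det E)$ is not equivalent to $Z_{s'}\supset Z_s$. It holds for every $s'$, since locally $X_1X_2'-X_2X_1'\in(X_1,X_2)$.
\item The map $\phi\mapsto\phi(s)$ is not injective. Its kernel $\Omega^1_X\otimes\mathscr L_d\cong\Theta_X\otimes i^*\mcO_{\PP^n}(\lambda-d-n)$ is what carries the obstruction.
\end{itemize}
So the group to kill is $H^1(X,\Omega^1_X(1-d))$, which is Serre dual to $H^1(X,\Omega^1_X(d-1))$. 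Thus $m=\pm(d-1)$, not $\pm(d-\lambda+n-1)$, and the discrepancy is not a sign convention. With your value of $m$, the conormal-sequence argument yields $d>2\lambda-2n+d_c$, not $d>\lambda-n+d_c$.

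With $m=d-1$ your argument does close:
\begin{itemize}
\item $H^2(X,\mcO_X(d-1-d_j))\cong H^0(X,\mcO_X(\lambda-n-d+d_j))^\vee$ vanishes for all $j$ exactly when $d>\lambda-n+d_c$.
\item $H^1(X,\Omega^1_{\PP^n}|_X(d-1))=0$ by the restricted Euler sequence and projective normality, because $d\neq 1$ (note that $\lambda-n+d_c\ge 1$ always).
\end{itemize}
This repaired version is the Serre dual of the paper's proof of Theorem~\ref{Th:RigiditySurfaces}. The paper applies the normal sequence to $\Theta_X\otimes i^*\mcO_{\PP^n}(\lambda-d-n)$ and kills $H^0(i^*\mcO_{\PP^n}(\lambda+d_j-d-n))$, which is dual to your $H^2$ terms. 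It then kills $H^1(i^*\Theta_{\PP^n}(\lambda-d-n))$ by a Koszul chase through the $\mathrm{Im}\,\varphi_k$ (Lemma~\ref{Lemma:Claim}). Your Euler-sequence treatment of the ambient term does the same job more economically.
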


Theorem \textbf{A} provides a complete solution to the existence problem for smooth projective complete intersection surfaces $X$. In particular, it provides explicit necessary and sufficient conditions for the existence of holomorphic foliations on $X$ with tangent sheaf $\mathscr{L}_d$. As an immediate consequence, it yields new solutions to the Poincaré problem for smooth projective complete intersection surfaces (Corollary \ref{PPSforCISurfaces}), extending the results of \cite{MarcioAnn}, where only complete intersections of odd dimension 
were considered. Moreover, for smooth 
hypersurfaces in $\PP^3$, Corollary \ref{PPSforCISurfaces} shows that the bound 
previously obtained in \cite{MarcioInv} is not only sufficient but it is also necessary.

Theorem \textbf{A} becomes particularly significant for \emph{very general} complete intersection surfaces $ X $:
by \cite[Theorem 0.1]{VeryGeneral}, every 
such an $ X $ has Picard rank equal to one 
(except for quadric and cubic surfaces in $\PP^3$, and the complete intersection of two quadrics in $\PP^4$).  In consequence, every line bundle on $X$ is of the form $i^*\mathcal{O}_{\PP^n}(k)$, for some $k\in\mathbb Z$. Since the space of foliations by 
curves on $X$ is given by
$$
\bigsqcup_{\mathscr L\in\mathrm{Pic}(X)}
\PP H^0(X,\Theta_X\otimes\mathscr L^\vee)
=
\bigsqcup_{d\in\mathbb Z}
\PP H^0(X,\Theta_X\otimes i^*\mathcal O_{\PP^n}(d-1)),
$$
by \cite[Theorem 9]{Universal},
Theorem~\textbf{A} gives \textit{all} tangent sheaves of holomorphic foliations on 
every such an $ X $, 
with the only exceptions mentioned above.



Let $ s \in H^0(X, \Theta_X \otimes i^{\ast} \mathcal{O}_{\PP^n}(d-1)) $ be a nonzero section
with isolated singularities. The \textit{degree} $ \mathrm{deg}(\mathrm{Z_s}) $
of the singular scheme of $ s $ is computed in Theorem \ref{eq:NumberofSing} below.
Let $\Lambda:=\mathrm{deg}(\mathrm{Z_s})$.


The assignment $[s]\mapsto Z_s$ defines a map from the space 
$ \mathscr{F}ol^{\mathrm{iso}}(i^{\ast} \mathcal{O}_{\PP^n}(1-d),X) $
of foliations on $X$ with isolated singularities and
tangent sheaf $i^{\ast} \mathcal{O}_{\PP^n}(1-d)$, 
to the Hilbert scheme 
$ \mathrm{Hilb}^{\Lambda}(X) $
of $\Lambda$ points in $X$:
$$
    \begin{array}{r@{\hspace{5pt}}c@{\hspace{5pt}}c@{\hspace{5pt}}l}
    \Gamma:& \mathscr{F}ol^{\mathrm{iso}}(i^{\ast} \mathcal{O}_{\PP^n}(1-d),X) &\longrightarrow& \mathrm{Hilb}^{\Lambda}(X)\\
    &\mathscr{F}=[s] &\longmapsto& Z_s.\\
    \end{array}
$$

Let $d>\sum_{i=1}^c d_i-n+d_c$. In these cases, Theorem \textbf{B} is related with
the fibers of $\Gamma$, namely, with those foliations $ [s'] $ such that $Z_{s'}=Z_s$:

If $\Theta_X$ is simple, Theorem \textbf{B} shows that every fiber of 
$\Gamma$ consists of a single point (see Remark \ref{Remark:CharSections} below), 
which means that $\Gamma$ is injective: this allows one to study foliations $ [s] $
on $ X $ through the study of its singular schemes $ Z_s $ (for instance, 
in a similar way as in \cite{Claudia,ClaudiaRubi,ClaudiaRonzon}, where the goal is
to classify foliations on $\PP^2$ through the geometry of its singular schemes).

If $\Theta_X$ is not simple, Theorem \textbf{B} and Theorem 
\ref{Thm:ESSifInvertible} 
give a partial description of the fibers of $\Gamma$, namely: if $s'=\phi(s)$
for a global \textit{automorphism} (invertible endomorphism)
$\phi \neq \lambda \cdot \mathrm{id}_{\Theta_X}$
of $ \Theta_X $, then the foliations $ [s] $ and $ [s'] $ are different and 
have the same singular scheme $ Z_{s'}=Z_s $.




The paper is organized as follows: Section \ref{Bckgnd} reviews the background material on holomorphic foliations and complete intersections needed throughout the paper. In particular, we collect the cohomological tools required for the proofs of the main theorems. In Section~\ref{Sec:Existence} we determine the dimension of $H^0(X,\Theta_X\otimes i^*\mathcal O_{\PP^n}(d-1))$, thereby proving Theorem \textbf{A}. In Section~\ref{Sec:Degree} we compute the number of singularities (counting multiplicities) of the foliations under consideration. Section \ref{Sec:SingScheme} contains the proof of Theorem \textbf{B}. Finally, the study of global endomorphisms of the tangent bundle 
$\Theta_X$ of $X$ is carried out in Section \ref{Sec:GlobalEnd}.

\section{Preliminaries}\label{Bckgnd}
 Let $ \PP^n = \textup{Proj} \ \CC[ x_0, \dots, x_n ] $ be the complex projective space of dimension 
 $ n \geq 2 $, and let $ \mcO_{\PP^n}$, $ \Theta_{\PP^n} $, $ \Omega^1_{\PP^n} $ and $ \mathcal{H} $
 denote its structure, tangent, cotangent and hyperplane sheaves, respectively. For an 
 $ \mcO_{\PP^n}$-sheaf $ \mathcal{E} $, we will write 
 $ \mcE( d ) $ for $ \mcE \otimes \mcH^{\otimes d}$, if $ d \geq 0 $, and 
$ \mcE \otimes (\mcH^{\vee})^{\otimes |d|}$, if $ d < 0 $. Similar notation will be adopted
for a submanifold (or subvariety) $ X \subset \PP^n $. Any vector bundle
will be identified with its locally free sheaf of sections. Finally, throughout the paper we use the classical combinatorial convention for binomial coefficients, namely $\binom{n}{d}=\frac{n!}{d!(n-d)!}$ if $0 \leq d \leq n$, and $0$ otherwise.

\subsection{\texorpdfstring{Foliations by Curves}{Foliations by Curves}:}\label{Subsec:FolinPn}
Let $ M $ be a compact connected complex manifold of dimension $ n $. Recall that a
holomorphic foliation by curves $ \mathscr{F} $ on $ M $  (simply \emph{a foliation on} $ M $
in the sequel) may be defined by
non-identically zero holomorphic vector fields $ X_i $ defined on a covering $ \{ V_i \} $ of $ M $
such that in each overlapping set $ V_i \cap V_j \neq \emptyset $ we have
\begin{equation}\label{cocicle}
   X_i = \xi_{ij} X_j,
\end{equation}
where $ \xi_{ij} $ is a never vanishing holomorphic function. If
$ L^{\vee} $ denotes the holomorphic line bundle constructed with the cocycle $( \xi_{ij} )$,
and $\mathscr{L}^{\vee}$ its corresponding invertible sheaf,
then the $ X_i $'s give rise to a global section
$ s \in H^{0}(  M , \ctM \otimes \mathscr{L}^{\vee}) $ or to a global section in
$ H^{0}( M, \mathscr{H}om_{ \mcO_M} (\mathscr{L}, \ctM) )$, where
$ \ctM $ is the tangent sheaf of $ M $ and $\mathscr{L}$ is the dual
of $\mathscr{L}^{\vee}$.
 Two global sections (in the corresponding spaces) define the same
foliation if and only if one is a non-zero scalar multiple of the other.

The line bundle
$ L^{\vee} $ just defined 
will be called the \emph{cotangent bundle of} $\mathscr{F}$ and its dual 
$ L $, \emph{its tangent bundle}.
Hence, the space $\mathscr{F}ol(\mathscr{L}, M) $ of foliations
$ \mcF $ with tangent bundle $ L $ (or tangent sheaf $ \mathscr{L} $)
is $ \PP H^{0}( M, \mathscr{H}om(\mathscr{L} , \ctM) )$.
Such an $ \mcF $ corresponds to a foliation with cotangent bundle $ L^{\vee}$ (or cotangent sheaf 
$ \mathscr{L}^{\vee} $)
by regarding it as the class $ [ s ] \in \PP H^{0}( M, \ctM \otimes \mathscr{L}^{\vee}) $ 
of a global section $ s \in H^{0}( M, \ctM \otimes \mathscr{L}^{\vee}) $.

Given a global section
$ s \in H^{0}( M, \mathscr{H}om_{ \mcO_M} (\mathscr{L} , \ctM) )$,
the scheme $ Z = Z_s $ of those points 
$ p\in M $ where the induced morphism
$\mathscr{L}_p\rightarrow \Theta_{M,p}$ is zero will be referred to as
the \emph{singular scheme} of $ s $: its ideal sheaf 
$ \mathcal{I}_Z \subset \mcO_M $
is the sheaf obtained by gluing the ideals $ (a_i^1, \ldots, a_i^n) \subset \mcO( V_i ) $,
where $ a_i^1,\ldots, a_i^n $ are the components of the vector field $ X_i $ that defines
$ s $ on the open set $ V_i $, as described nearby \eqref{cocicle}. It is clear that the singular scheme
of $ \mcF = [s] $ is the singular scheme of any section in $ [s] $.

We say that $ \mcF = [s]$ has \emph{isolated singularities} if
$ \text{dim } Z = 0 $. We will say that a foliation $\mathscr{F}=[s]$ is uniquely determined by its singular scheme $Z_s$, if it is the unique one with that singular scheme. More precisely, if there exist another section $s'$ with isolated singularities and satisfying that $Z_{s'} \supset Z_s$, then $s$ and $s'$ define the same foliation. 

Foliations by curves 
with tangent sheaf 
$ \mathcal{O}_{\PP^n}(1-d) $ on $\PP^n$ arise
from polynomial homogeneous vector fields of degree $ d $  in $ \CC^{n+1} $ through the following 
construction: Let $ \pi : \CC^{n+1} \setminus \{0\} \longrightarrow \PP^n ; \; p\mapsto [p] $, be the
projection map. Tensor the Euler sequence in $\PP^n$ \cite[p. 3]{Okonek} with the invertible sheaf
$\mathcal{O}_{\PP^n}(d)$ to obtain the exact sequence
\begin{equation*}
    0 \longrightarrow \mathcal{O}_{\PP^n}(d-1) \longrightarrow \mathcal{O}_{\PP^n}(d)^{\oplus (n+1)} \overset{ \pi_{*} }{\longrightarrow} \Theta_{\PP^n}(d-1) \longrightarrow 0,
\end{equation*}
whose long exact cohomology sequence is:
\begin{equation}\label{twisteulseq}
    0 \to H^0(\PP^n,\mathcal{O}_{\PP^n}(d-1)) \to H^0 (\PP^n,\mathcal{O}_{\PP^n}(d))^{\oplus (n+1)} 
    \overset{ \pi_{*} }{\longrightarrow} H^0(\PP^n,\Theta_{\PP^n}(d-1)) \to 0,
\end{equation}
since $H^1(\PP^n, \mathcal{O}_{\PP^n}(d-1))=0$ (see Lemma \ref{LemmBottFla} below).

It follows from \eqref{twisteulseq} that any section
$ s \in H^0( \PP^n, \Theta_{\PP^n}( d-1 ) ) \cong 
H^0( \PP^n, \mathscr{H}om( \mcO_{\PP^n}( 1-d ), \Theta_{\PP^n} ) ) $
 (and hence,
 any foliation on $\PP^n$ with tangent sheaf $ \mathcal{O}_{\PP^n}(1-d) $) 
 is defined by a polynomial homogeneous vector field of degree $d$ in $\CC^{n+1}$, where 
 two such vector fields $F$ and $G$ define the same section $ s $ iff $G= F + h \cdot R$, where
 $R=\sum_{i=0}^n x_i \frac{\partial}{\partial x_i}$ is the radial vector field in $\CC^{n+1}$ and
  $h$  is  some homogeneous polynomial of degree $d-1$. In particular, no such global sections
  $ s $ exist for $ d < 0 $.

\subsection{Smooth projective complete intersections:}
Let $X=V(f_1, \ldots, f_c) \subset \PP^n$ be a smooth complete intersection of codimension $ c $, where each $f_i$ is a homogeneous polynomial of degree $d_i$. We shall refer to such an $X$ as a 
\emph{smooth complete intersection of type $(d_1, \ldots, d_c)$ in $ \PP^n $}. We assume that 
$ d_i > 1 $ for every $i$ to guarantee that $ n $ is minimal.

The augmented Koszul complex \cite[Chapter IV, $\mathsection 2$]{RRAlgebra} 
associated to such an $ X $
gives the following resolution of the \emph{ideal sheaf} $\mathcal{I}_X$ of $X$:
\begin{equation}\label{KoszCxSh}
0 \longrightarrow \mathcal{O}_{\PP^n}\left(-\sum_{i=1}^c d_i \right) \overset{\varphi_c}{\longrightarrow} \cdots \overset{\varphi_3}{\longrightarrow} \bigoplus_{1 \leq j < k \leq c} \mathcal{O}_{\PP^n}(-d_j-d_k) \overset{\varphi_2}{\longrightarrow}\bigoplus_{j=1}^c \mathcal{O}_{\PP^n}(-d_j) \overset{\varphi_1}{\longrightarrow}\mathcal{I}_X \longrightarrow 0.
\end{equation}
The long exact sequence \eqref{KoszCxSh} breaks into the short exact sequences given by 
\begin{equation}\label{eq:SystemIdeal}
    \begin{split}
    0& \longrightarrow \mathrm{Im}\ \varphi_2 \longrightarrow \bigoplus_{j=1}^c \mathcal{O}_{\PP^n}(-d_j) \longrightarrow \mathcal{I}_X \longrightarrow 0\\
    0& \longrightarrow \mathrm{Im}\ \varphi_3 \longrightarrow \bigoplus_{\substack{ J \subset \{1, \ldots, c\} \\ |J|=2}} \mathcal{O}_{\PP^n} \left(- \sum_{i \in J} d_i \right) \longrightarrow \mathrm{Im}\ \varphi_2 \longrightarrow 0\\
    0& \longrightarrow \mathrm{Im}\ \varphi_4 \longrightarrow \bigoplus_{\substack{ J \subset \{1, \ldots, c\} \\ |J|=3}} \mathcal{O}_{\PP^n} \left(- \sum_{i \in J} d_i \right) \longrightarrow \mathrm{Im}\ \varphi_3 \longrightarrow 0\\
    \vdots&\\
    0& \longrightarrow \mathcal{O}_{\PP^n}(-\lambda) \longrightarrow \bigoplus_{\substack{ J \subset \{1, \ldots, c\} \\ |J|=c-1}} \mathcal{O}_{\PP^n} \left(- \sum_{i \in J} d_i \right) \longrightarrow \mathrm{Im}\ \varphi_{c-1} \longrightarrow 0,
\end{split}
\end{equation}
where $\lambda= \sum_{j=1}^c d_j$.
It is not hard to see from \eqref{KoszCxSh} that
the \emph{normal sheaf} $\mathcal{N}_{X/\PP^n}$ of $X$ in $\PP^n$, defined as the dual of the conormal sheaf $\mathcal{I}_X/ \mathcal{I}_X^2$, is given by
\begin{equation}\label{NormalBundle}
\mathcal{N}_{X/\PP^n} \cong \bigoplus_{i=1}^ci^{\ast} \mathcal{O}_{\PP^n}(d_i).
\end{equation}
This sheaf may be incorporated in a sequence that involves the tangent sheaf of $X$ with restriction to $X$ of the tangent sheaf of the ambient space, given by
\begin{equation}\label{eq:NormalSequence}
    0 \longrightarrow \Theta_X \longrightarrow i^{\ast} \Theta_{\PP^n} \longrightarrow \bigoplus_{i=1}^ci^{\ast} \mathcal{O}_{\PP^n}(d_i) \longrightarrow 0,
\end{equation}
known as the \emph{normal sequence} of $X$ in $\PP^n$.

By \cite[Ex. 8.4 (e)]{Hartshorne}, the canonical sheaf of $X$ is given by
\begin{equation}\label{CanonicalBundle}
    \omega_X \cong i^{\ast}\mathcal{O}_{\PP^n}\left(-n-1+\sum_{i=1}^c d_i\right),
\end{equation}
which will play a key role in relating vector fields and differential forms on $X$, through the formula
\begin{equation}\label{eq:FormTanCot}
    \Omega^{\mathrm{dim}(X)-1}_X \cong \Theta_X \otimes i^{\ast}\mathcal{O}_{\PP^n}\left(-n-1+\sum_{i=1}^c d_i\right),
\end{equation}
given by the isomorphism $\Omega^{\mathrm{dim}(X)-1}_X \cong \Theta_X \otimes \omega_X$ in \cite[Theorem 3.6.1]{Hirzebruch}.

\medskip
\subsection{Some cohomological formulas:}
The following formulas provide explicit expressions for the cohomology groups that will be used throughout the proofs of the main theorems.

We begin by recalling the following classical result (see \cite[p.~4]{Okonek}): 
\begin{lemma}[Bott formulas]\label{LemmBottFla}
$$h^q(\PP^n, \Omega^p_{\PP^n}(k))= 
    \begin{cases}
    \binom{k+n-p}{n-p}\binom{k-1}{p}, & \text{for } q=0, 0 \leq p \leq n, k>p,\\
    1, & \text{for } k=0, 0 \leq p=q \leq n,\\
    \binom{-k+p}{p}\binom{-k-1}{n-p}, & \text{for } q=n, 0 \leq p \leq n, k<p-n,\\
    0, & \text{otherwise.}
    \end{cases}$$
In particular, for $p=0$ we have
$$h^q(\PP^n, \mathcal{O}_{\PP^n}(k))= 
    \begin{cases}
    \binom{k+n}{n}, & \text{for } q=0, k \geq 0,\\
    \binom{-k-1}{n}, & \text{for } q=n, k \leq -n-1,\\
    0, & \text{otherwise.}
    \end{cases}$$
\end{lemma}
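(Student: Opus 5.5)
The statement is the classical Bott formula, so I do not plan to reprove it in full. I will quote it from \cite[p.~4]{Okonek} and only indicate the standard argument, with most of the effort going into the special case $p=0$, which is what the paper actually uses.

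For $p=0$ I will argue directly on the \v{C}ech complex of the standard affine cover $\{x_i\neq 0\}$ of $\PP^n$.
\begin{itemize}
\item In degree $0$, the global sections of $\mathcal{O}_{\PP^n}(k)$ are the homogeneous polynomials of degree $k$ in $n+1$ variables. Counting monomials gives $\binom{k+n}{n}$ for $k\ge 0$, and $0$ for $k<0$.
\item In top degree, $H^n$ is spanned by the Laurent monomials $x_0^{a_0}\cdots x_n^{a_n}$ with all $a_i\le -1$ and $\sum a_i=k$. Writing $b_i=-a_i-1\ge 0$ with $\sum b_i=-k-n-1$, the count is $\binom{-k-1}{n}$, which is nonzero exactly when $k\le -n-1$.
\item The intermediate groups $H^q$ for $0<q<n$ vanish. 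This follows from the standard exactness of the \v{C}ech complex graded by multidegree, as in Hartshorne III.5.1.
\end{itemize}
These three computations give the second displayed formula.

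For general $p$, I will proceed by induction on $p$ using the exterior powers of the Euler sequence:
\[
0\to\Omega^p_{\PP^n}(k)\to \mathcal{O}_{\PP^n}(k-p)^{\oplus\binom{n+1}{p}}\to \Omega^{p-1}_{\PP^n}(k)\to 0 .
\]
Taking the long exact cohomology sequence and feeding in the $p=0$ case together with the inductive hypothesis gives the following.
\begin{itemize}
\item $H^q$ vanishes except for $q=0$, $q=n$, and the diagonal case $k=0$, $q=p$.
\item For $k=0$, the diagonal terms propagate by connecting isomorphisms $H^{p-1}(\Omega^{p-1})\cong H^p(\Omega^p)$, each one-dimensional.
\item For $q=0$ and $k>p$, the dimension comes from the alternating sum over the truncated Koszul complex:
\[
h^0=\sum_{j=0}^{p}(-1)^{j}\binom{n+1}{p-j}\binom{k-p+j+n}{n}.
\]
This must be shown to equal $\binom{k+n-p}{n-p}\binom{k-1}{p}$, which is a combinatorial identity.
\item The $q=n$ row follows from the $q=0$ row by Serre duality, using $\Omega^p\cong(\Omega^{n-p})^\vee\otimes\omega$ and $\omega=\mathcal{O}_{\PP^n}(-n-1)$.
\end{itemize}

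The main obstacle is the combinatorial identity in the $q=0$ case, together with the boundary bookkeeping at $0<k\le p$, where one must show that $h^0=0$. I would settle the identity by a generating-function check. Alternatively, I would identify $H^0(\Omega^p(k))$ with the kernel of the contraction $\iota_R$ by the radial field on $\bigwedge^p V^\vee\otimes S^{k-p}V^\vee$, and read off its dimension from the exactness of the Koszul complex of $\iota_R$.
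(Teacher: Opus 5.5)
The paper gives no proof of this lemma: it recalls the Bott formulas as a classical fact with the reference \cite[p.~4]{Okonek}, so your plan of quoting that source is exactly what the paper does, and your \v{C}ech count for $p=0$ and your Serre-duality reduction $h^n(\Omega^p_{\PP^n}(k))=h^0(\Omega^{n-p}_{\PP^n}(-k))$ for the $q=n$ row are correct. In your sketch for general $p$, however, at $q=1$ the long exact sequence only determines $h^1(\Omega^p_{\PP^n}(k))-h^0(\Omega^p_{\PP^n}(k))$, so both the vanishing of $H^1$ for $k\neq 0$ and your alternating-sum formula for $h^0$ depend on the surjectivity of $H^0(\mathcal{O}_{\PP^n}(k-p))^{\oplus\binom{n+1}{p}}\to H^0(\Omega^{p-1}_{\PP^n}(k))$, i.e.\ on exactness in positive degree of the Koszul complex of $x_0,\dots,x_n$; the route in your last sentence is therefore a necessary input, not an optional alternative.
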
  
We follow by recalling two standard facts relating the cohomology of a sheaf on a closed subvariety with the cohomology of its extension by zero on the ambient space.
\begin{proposition}\label{PropHart}
Let $X$ be a closed subset of $Y$, let $\mcE$ and $ \mathcal{F} $ be locally free sheaves on $X$ 
and $Y$ respectively,  and let $i : X \to Y$ be the inclusion. Then:
\begin{itemize}
    \item[(1) ] $H^k(X,\mcE)=H^k(Y,i_{\ast}\mcE)$, where $i_{\ast}\mcE$ is the extension of 
    $\mcE$ by zero outside $X$. In particular, 
    $ H^k(X,i^{\ast}\mathcal{F}) = H^k(Y,i_{\ast}i^{\ast}\mathcal{F}) $.
    \item[(2) ] The sequence 
    $$ 0 \longrightarrow \mathcal{F} \otimes_{\mathcal{O}_Y} \mathcal{I}_X 
     \longrightarrow \mathcal{F} \longrightarrow i_{\ast}i^{\ast} \mathcal{F} \longrightarrow 0 $$
    is exact.
\end{itemize}
\begin{proof}
For the proof of \textit{(1)}, see \cite[III Lemma 2.10]{Hartshorne}.

Now we prove \textit{(2)}:
Consider the exact sequence on $Y$ of the ideal sheaf of $X$ \cite[p. 115]{Hartshorne}
$$
0 \longrightarrow \mathcal{I}_X \longrightarrow \mathcal{O}_Y \longrightarrow i_{\ast}\mathcal{O}_X \longrightarrow 0,
$$
and twist it by the sheaf $\mathcal{F} $ to obtain 
$$
0 \longrightarrow \mathcal{F} \otimes_{\mathcal{O}_Y} \mathcal{I}_X \longrightarrow \mathcal{F} 
    \longrightarrow \mathcal{F} \otimes_{\mathcal{O}_Y} i_{\ast}\mathcal{O}_X  \longrightarrow 0.
$$
It follows from the Projection Formula \cite[Ex. 5.1 (d)]{Hartshorne} that
$ \mathcal{F} \otimes_{\mathcal{O}_Y} i_{\ast}\mathcal{O}_X \cong i_{\ast}i^{\ast} \mathcal{F} $.
\end{proof}
\end{proposition}

For the following result, see \cite[\S 2.2]{Cr-Es}:
\begin{lemma}\label{aCM}
Let $ X \subset \PP^n $ be a complete intersection with ideal sheaf $\mathcal{I}_X$. Then
$ H^j( \PP^n, \mathcal{I}_X( m ) ) = 0 $ for every $ m \in \ZZ $ and 
$ j = 1, \dots, \textup{dim}(X) $.
\end{lemma}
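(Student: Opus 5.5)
The statement is the vanishing $H^j(\PP^n,\mathcal{I}_X(m))=0$ for $1\le j\le \dim X$ and all $m\in\ZZ$. The natural route is to chase cohomology through the Koszul resolution \eqref{KoszCxSh}, using the short exact sequences \eqref{eq:SystemIdeal} together with the Bott formulas of Lemma \ref{LemmBottFla}. The only input from Lemma \ref{LemmBottFla} is that every line bundle $\mathcal{O}_{\PP^n}(k)$ has vanishing cohomology in degrees $1\le q\le n-1$. Set $K_1=\mathcal{I}_X$, $K_r=\mathrm{Im}\,\varphi_r$ for $2\le r\le c$, with $K_c=\mathcal{O}_{\PP^n}(-\lambda)$, and let $E_r=\bigoplus_{|J|=r}\mathcal{O}_{\PP^n}(-\sum_{i\in J}d_i)$. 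Twisting by $\mathcal{O}_{\PP^n}(m)$ preserves exactness, so \eqref{eq:SystemIdeal} gives exact sequences
$$0\to K_{r+1}(m)\to E_r(m)\to K_r(m)\to 0,\qquad 1\le r\le c-1 .$$

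The key step is a dimension-shifting claim: $H^j(\PP^n,K_r(m))=0$ for all $m$ and all $1\le j\le n-1-(c-r)$. For $r=c$ this is immediate, since $K_c(m)$ is a line bundle, so its cohomology vanishes for $1\le j\le n-1$. For the descending induction, assume the claim for $K_{r+1}$. In the long exact sequence
$$H^j(E_r(m))\to H^j(K_r(m))\to H^{j+1}(K_{r+1}(m)),$$
take $1\le j\le n-1-(c-r)$. Then $j\le n-1$, so the left term vanishes by Lemma \ref{LemmBottFla}. Also $1\le j+1\le n-1-(c-r-1)$, so the right term vanishes by the induction hypothesis. Hence the middle term vanishes.

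Taking $r=1$ gives vanishing of $H^j(\mathcal{I}_X(m))$ for $1\le j\le n-c=\dim X$, which is the statement.

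The one point needing care is the index bookkeeping at the top degree. When $j=n-1-(c-r)$, the shifted degree $j+1=n-(c-r)$ must still lie within the range established for $K_{r+1}$. That range is $j+1\le n-1-(c-r-1)=n-(c-r)$, so the inequality holds exactly, with no slack. No other step should cause trouble: the Bott vanishing is uniform in $m$, so no case split on the twist is required.
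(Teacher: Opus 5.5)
Your proof is correct. The paper itself gives no argument for this lemma; it only cites \cite[\S 2.2]{Cr-Es}.

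The standard reason behind such a citation is that a complete intersection is arithmetically Cohen--Macaulay. With $S=\CC[x_0,\dots,x_n]$ and $I=(f_1,\dots,f_c)$, the ring $S/I$ is a polynomial ring modulo a regular sequence, hence Cohen--Macaulay of dimension $\dim X+1$. Local cohomology identifies $\bigoplus_m H^j(\PP^n,\mathcal{I}_X(m))$ with $H^j_{\mathfrak m}(S/I)$ for $1\le j\le n-1$, and this vanishes for $j\le \dim X$.

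Your descending induction along \eqref{eq:SystemIdeal} replaces this commutative algebra with an elementary, self-contained argument. It uses only the exactness of \eqref{KoszCxSh} and Bott vanishing for line bundles. It is the same dimension-shifting pattern the paper runs in Propositions \ref{Prop:RestrictedBottFirst} and \ref{Prop:h0Minush1p1} and Lemma \ref{Lemma:Claim}. Your bookkeeping is right: the range $1\le j\le n-1-(c-r)$ for $K_r$ closes off exactly at $j+1=n-(c-r)$, and at $r=1$ gives precisely $1\le j\le n-c=\dim X$.

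The Cohen--Macaulay route buys generality: it holds verbatim for any arithmetically Cohen--Macaulay subscheme and explains why $\dim X$ is the cutoff. Yours stays entirely within the paper's own toolkit. Neither uses smoothness of $X$; both need only that $f_1,\dots,f_c$ form a regular sequence, so that the Koszul complex is exact.
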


We begin by computing the cohomology of the restriction of the structure sheaf of 
$\mathbb P^n$ to $X$.

\begin{proposition}\label{Prop:RestrictedBottFirst}
Let $X=V(f_1, \ldots, f_c) \subset \PP^n$, with $n \geq 3$, be a smooth complete intersection surface of type $(d_1, \ldots, d_c)$ in $\PP^n$. Then, for every integer $k$,
    \begin{align*}
        &h^0(X,i^{\ast}\mathcal{O}_{\PP^n}(k))=
        \sum_{J\subset \{1,\ldots,c\}} (-1)^{|J|} \ \binom{-\sum_{i\in J}d_i+k+n}{n}, \textup{ and}\\
        &h^1(X,i^{\ast}\mathcal{O}_{\PP^n}(k))=0.
    \end{align*}
\begin{proof}
If $n=3$, Proposition \ref{PropHart} yields the exact sequence
$$0 \to \mathcal{O}_{\PP^3}(k-d_1) \to \mathcal{O}_{\PP^3}(k) \to i_{\ast} i^{\ast}\mathcal{O}_{\PP^3}(k) \to 0.$$
Applying Lemma \ref{LemmBottFla} to the associated sequence in cohomology, we obtain
\begin{align*}
    h^0(i_{\ast} i^{\ast}\mathcal{O}_{\PP^3}(k))=&h^0(\mathcal{O}_{\PP^3}(k))-h^0(\mathcal{O}_{\PP^3}(k-d_1))\\
    =&\binom{k+3}{3}-\binom{-d_1+k+3}{3},
\end{align*}
which agrees with the stated formula.

Assume now that $n \geq 4$, by Proposition \ref{PropHart}
$$0 \to \mathcal{I}_X(k) \to \mathcal{O}_{\PP^n}(k) \to i_{\ast} i^{\ast}\mathcal{O}_{\PP^n}(k) \to 0.$$
Since $h^{q}(\PP^n, \mathcal{I}_X(k))=0$ for $q=1,2$ by Lemma \ref{aCM}, the associated sequence in cohomology yields
\begin{equation}\label{eq:GoalsPZero}
    \begin{split}
        &h^{0}( i_{\ast} i^{\ast}\mathcal{O}_{\PP^n}(k))= h^0(\mathcal{O}_{\PP^n}(k))- h^0(\mathcal{I}_X(k)),\\
        &h^{1}( i_{\ast} i^{\ast}\mathcal{O}_{\PP^n}(k)) =0.
\end{split}
\end{equation}
To compute $h^0(\mathcal{I}_X(k)$, tensor the short exact sequences in \eqref{eq:SystemIdeal} with $\mathcal{O}_{\PP^n}(k)$ to obtain 
$$0 \to \mathrm{Im}\ \varphi_{s+1} \otimes \mathcal{O}_{\PP^n}(k) \to \bigoplus_{\substack{ J \subset \{1, \ldots, c\} \\ |J|=s}} \mathcal{O}_{\PP^n} \left(- \sum_{i \in J} d_i +k \right) \to \mathrm{Im}\ \varphi_s \otimes \mathcal{O}_{\PP^n}(k) \to 0,$$
for every $s=1, \ldots c-1$.
By Lemma \ref{LemmBottFla} we have that 
\begin{equation*}
        \bigoplus_{J \subset \{1, \ldots, c\}}H^q\left(\mathcal{O}_{\PP^n}\left(-\sum_{i \in J}d_i+k\right)\right)=0, \quad \textup{ for all } k \textup{ and for } q=1,2.
    \end{equation*}
Therefore, we have the isomorphisms
$$H^1(\mathrm{Im} \ \varphi_k \otimes \mathcal{O}_{\PP^n}(k)) \cong H^2( \mathrm{Im} \ \varphi_{k+1} \otimes \mathcal{O}_{\PP^n}(k)), \textup{ for } k=1, \ldots, c-1,$$
where $H^1( \mathrm{Im} \ \varphi_1 \otimes \mathcal{O}_{\PP^n}(k)) \cong H^1( \mathcal{I}_X(k))$ and $H^2( \mathrm{Im} \ \varphi_{c} \otimes \mathcal{O}_{\PP^n}(k)) \cong H^2( \mathcal{O}_{\PP^n}(-\lambda +k))\cong 0$. This concludes that $H^1( \mathrm{Im} \ \varphi_k \otimes \mathcal{O}_{\PP^n}(k))=0$ for $k=1, \ldots, c$ and we obtain the associated cohomology sequence in $\PP^n$
$$0 \to H^0(\mathrm{Im}\ \varphi_{s+1} \otimes \mathcal{O}_{\PP^n}(k)) \to \bigoplus_{\substack{ J \subset \{1, \ldots, c\} \\ |J|=s}} H^0\left(\mathcal{O}_{\PP^n} \left(- \sum_{i \in J} d_i +k \right) \right) \to H^0(\mathrm{Im}\ \varphi_s \otimes \mathcal{O}_{\PP^n}(k)) \to 0,$$
for every $s=1, \ldots, c$.
Consequently, 
$$h^0(\mathrm{Im} \ \varphi_{s} \otimes \mathcal{O}_{\PP^n}(k))=\sum_{\substack{ J \subset \{1, \ldots, c\} \\ |J|=s}} h^0\left(\mathcal{O}_{\PP^n} \left(- \sum_{i \in J} d_i +k\right)\right)-h^0(\mathrm{Im} \ \varphi_{s+1} \otimes \mathcal{O}_{\PP^n}(k)),$$
for every $s=1, \ldots, c-1$. Iterating this identity gives 
$$h^{0}( \mathcal{I}_X(k))=\sum_{s=1}^c (-1)^{s+1} \ \sum_{\substack{ J \subset \{1, \ldots, c\} \\ |J|=s}} h^0\left(\mathcal{O}_{\PP^n} \left(- \sum_{i \in J} d_i +k\right)\right),$$
Finally, combining this expression with \eqref{eq:GoalsPZero} and applying Lemma \ref{LemmBottFla}, we obtain
\begin{align*}
    h^{0}( i_{\ast} i^{\ast}\mathcal{O}_{\PP^n}(k))
    &= h^0(\mathcal{O}_{\PP^n}(k))- h^0(\mathcal{I}_X(k))\\
    &=\sum_{J\subset \{1,\ldots,c\}} (-1)^{|J|} \binom{-\sum_{i\in J}d_i+k+n}{n}.
\end{align*}
as required.
\end{proof}
\end{proposition}

\begin{proposition}\label{Prop:h0Minush1p1}
    Let $X=V(f_1, \ldots, f_c) \subset \PP^n$, with  $n \geq 4$, be a smooth complete intersection surface of type $(d_1, \ldots, d_c)$ in $\PP^n$. Then, for every integer $k$,
    \begin{align*}
    h^0(\Omega^1_{\PP^n}(k)& \otimes \mathcal{I}_X)-h^1(\Omega^1_{\PP^n}(k) \otimes \mathcal{I}_X)\\
    &=\sum_{s=1}^c (-1)^{s+1} \ \sum_{\substack{ J \subset \{1, \ldots, c\} \\ |J|=s}} \left[ h^0 \left(\Omega^1_{\PP^n} \left(- \sum_{i \in J} d_i +k\right)\right)- h^1 \left(\Omega^1_{\PP^n} \left(- \sum_{i \in J} d_i +k\right)\right) \right].
\end{align*}
\begin{proof}
Tensor the short exact sequences in \eqref{eq:SystemIdeal} with $\Omega^1_{\PP^n}(k)$ to obtain 
$$0 \to \mathrm{Im}\ \varphi_{s+1} \otimes \Omega^1_{\PP^n}(k) \to \bigoplus_{\substack{ J \subset \{1, \ldots, c\} \\ |J|=s}} \Omega^1_{\PP^n} \left(- \sum_{i \in J} d_i +k \right) \to \mathrm{Im}\ \varphi_s \otimes \Omega^1_{\PP^n}(k) \to 0,$$
for every $s=1, \ldots, c-1$.
By Lemma \ref{LemmBottFla} we have that 
\begin{equation*}
        \bigoplus_{J \subset \{1, \ldots, c\}}H^q\left(\Omega^1_{\PP^n}\left(-\sum_{i \in J}d_i+k\right)\right)=0, \quad \textup{ for all } k \textup{ and for } q=2,3.
    \end{equation*}
Therefore, we have the isomorphisms
$$H^2(\mathrm{Im} \ \varphi_k \otimes \Omega^1_{\PP^n}(k)) \cong H^3( \mathrm{Im} \ \varphi_{k+1} \otimes \Omega^1_{\PP^n}(k)), \textup{ for } k=1, \ldots, c-1,$$
where $H^2( \mathrm{Im} \ \varphi_1 \otimes \Omega^1_{\PP^n}(k)) \cong H^2( \mathcal{I}_X \otimes \Omega^1_{\PP^n}(k))$ and $H^3( \mathrm{Im} \ \varphi_{c} \otimes \Omega^1_{\PP^n}(k)) \cong H^3( \Omega^1_{\PP^n}(-\lambda +k))\cong 0$. This concludes that $H^2( \mathrm{Im} \ \varphi_k \otimes \Omega^1_{\PP^n}(k))=0$ for $k=1, \ldots, c$, and we obtain the associated cohomology sequence in $\PP^n$
\begin{align*}
    0 \to H^{0}(\mathrm{Im} \ \varphi_{s+1} \otimes \Omega^1_{\PP^n}(k)) \to \bigoplus_{\substack{ J \subset \{1, \ldots, c\} \\ |J|=s}} H^0 \left(\Omega^1_{\PP^n} \left(- \sum_{i \in J} d_i +k\right)\right) \to H^0(\mathrm{Im} \ \varphi_{s} \otimes \Omega^1_{\PP^n}(k)) \to&\\
    \to H^{1}(\mathrm{Im} \ \varphi_{s+1} \otimes \Omega^1_{\PP^n}(k)) \to \bigoplus_{\substack{ J \subset \{1, \ldots, c\} \\ |J|=s}} H^1 \left(\Omega^1_{\PP^n} \left(- \sum_{i \in J} d_i +k\right)\right) \to H^1(\mathrm{Im} \ \varphi_{s} \otimes \Omega^1_{\PP^n}(k)) \to 0,&
\end{align*}
for every $s=1, \ldots, c-1$. Taking the alternating sum of dimensions in the corresponding long exact sequences, we obtain
\begin{equation}\label{eq:p1q1formula}
\begin{split}
    h^0(\mathrm{Im} \ \varphi_{s} &\otimes \Omega^1_{\PP^n}(k))-h^1(\mathrm{Im} \ \varphi_{s} \otimes \Omega^1_{\PP^n}(k))\\
    =&h^{1}(\mathrm{Im} \ \varphi_{s+1} \otimes \Omega^1_{\PP^n}(k))-h^{0}(\mathrm{Im} \ \varphi_{s+1} \otimes \Omega^1_{\PP^n}(k))\\
    &+\sum_{\substack{ J \subset \{1, \ldots, c\} \\ |J|=s}} \left[ h^0 \left(\Omega^1_{\PP^n} \left(- \sum_{i \in J} d_i +k\right)\right)- h^1 \left(\Omega^1_{\PP^n} \left(- \sum_{i \in J} d_i +k\right)\right) \right],
\end{split}
\end{equation}
for every $s =1, \ldots, c-1$. Applying \eqref{eq:p1q1formula} iteratively for every $s$ we conclude that
\begin{align*}
    h^0(\Omega^1_{\PP^n}(k)& \otimes \mathcal{I}_X)-h^1(\Omega^1_{\PP^n}(k) \otimes \mathcal{I}_X)\\
    &=\sum_{s=1}^c (-1)^{s+1} \ \sum_{\substack{ J \subset \{1, \ldots, c\} \\ |J|=s}} \left[ h^0 \left(\Omega^1_{\PP^n} \left(- \sum_{i \in J} d_i +k\right)\right)- h^1 \left(\Omega^1_{\PP^n} \left(- \sum_{i \in J} d_i +k\right)\right) \right].
\end{align*}
\end{proof}
\end{proposition}

\section{Existence of Foliations and the Poincaré Problem}\label{Sec:Existence}
Let $X=V(f_1, \ldots, f_c) \subset \PP^n$ be a smooth complete intersection surface of type 
$(d_1, \ldots, d_c)$ in $\PP^n$. We begin with the case where $n=3$, 
this means that $X=V(f)\subset \PP^3$ is a smooth hypersurface of degree $d_1$.

\begin{proposition}\label{Prop:neq3new}
    Let $X=V(f) \subset \PP^3$ be a smooth hypersurface of degree $d_1$. Then $X$ admits a foliation with tangent sheaf $i^{\ast}\mathcal{O}_{\PP^3}(1-d)$ if and only if 
$$d \geq d_1-1.$$ 
Moreover, 
the dimension of the space of sections defining them is given by
    \begin{equation*}
 h^0 \left(X, \Theta_X \otimes i^{\ast}\mathcal{O}_{\PP^3}(d-1)\right)
    = \begin{cases}
     T(d,d_1) + 1, & \text{if } d =-3+ 2d_1,\\
     T(d,d_1), & \text{otherwise,}
    \end{cases}
\end{equation*}
where 
\begin{align*}
    T(d,d_1)=&\binom{5+d-d_1}{2}\binom{2+d-d_1}{1}-\binom{5+d-2d_1}{2}\binom{2+d-2d_1}{1}\\
    &-\binom{6+d-2d_1}{3}+\binom{6+d-3d_1}{3}.
\end{align*}
\begin{proof}
By \eqref{eq:FormTanCot} we have that
$$H^0 \left(X, \Theta_X \otimes i^{\ast}\mathcal{O}_{\PP^3}(d-1)\right) \cong H^0\left(X,\Omega^1_X \otimes i^{\ast}\mathcal{O}_{\PP^3}(3+d-d_1)\right).$$
We will compute the dimension of the latter space:

Tensor the dual sequence of \eqref{eq:NormalSequence} with the sheaf $i^{\ast}\mathcal{O}_{\PP^3}(3+d-d_1)$ to obtain the exact sequence in $X$
     $$0  \to  H^0(i^{\ast} \mathcal{O}_{\PP^3}(3+d-2d_1)) \to H^0(i^{\ast} \Omega_{\PP^3}^{1}(3+d-d_1)) \to  H^0(\Omega_X^{1}\otimes i^{\ast}\mathcal{O}_{\PP^3}(3+d-d_1)) \to 
     0,$$
     as $ h^1(i^{\ast} \mathcal{O}_{\PP^3}(3+d-2d_1)) = 0 $, 
     by Proposition \ref{Prop:RestrictedBottFirst}. Hence
     \begin{equation}\label{eq:HypP3}
         h^0(\Omega_X^{1}\otimes i^{\ast}\mathcal{O}_{\PP^3}(3+d-d_1))=h^0(i^{\ast} \Omega_{\PP^3}^{1}(3+d-d_1))-h^0(i^{\ast} \mathcal{O}_{\PP^3}(3+d-2d_1)).
     \end{equation}
     By Proposition \ref{PropHart} we have the short exact sequence
     $$0 \to \Omega^1_{\PP^3}(3+d-2d_1) \to \Omega^1_{\PP^3}(3+d-d_1) \to i_{\ast} i^{\ast} \Omega^1_{\PP^3}(3+d-d_1) \to 0.$$
     Using Lemma \ref{LemmBottFla} we conclude that
     $$h^0( i_{\ast} i^{\ast} \Omega^1_{\PP^3}(3+d-d_1))=-h^0(\Omega^1_{\PP^3}(3+d-2d_1))+h^0( \Omega^1_{\PP^3}(3+d-d_1))+h^1( \Omega^1_{\PP^3}(3+d-2d_1)).$$
     Replacing the dimension above into \eqref{eq:HypP3} we obtain
     \begin{align*}
         h^0(\Omega_X^{1}\otimes i^{\ast}\mathcal{O}_{\PP^3}(3+d-d_1))=&-h^0(\Omega^1_{\PP^3}(3+d-2d_1))+h^0( \Omega^1_{\PP^3}(3+d-d_1))+h^1( \Omega^1_{\PP^3}(3+d-2d_1))\\
         &-h^0(i^{\ast}\mathcal{O}_{\PP^3}(3+d-2d_1) ),
     \end{align*}
     Lemma \ref{LemmBottFla} and Proposition \ref{Prop:RestrictedBottFirst} yield the stated formula.

Furthermore, by Lemma \ref{LemmBottFla}, the terms in $T(d,d_1)$ do not vanish for  $d \geq d_1-1$, hence
\begin{equation}\label{eq:Injectiven3}
    h^0(X,\Theta_X\otimes i^*\mathcal O_{\PP^3}(d-1))>0
\textup{ if and only if }
d\geq d_1-1.
\end{equation}
Since foliations with tangent sheaf
$i^*\mathcal O_{\PP^3}(1-d)$
correspond to elements in $
H^0(X,\Theta_X\otimes i^*\mathcal O_{\PP^3}(d-1))
$ up to scalar multiplication,
the previous computation shows that such foliations exist if and only if $d\geq d_1-1$.
\end{proof}
\end{proposition}

For higher-codimensional complete intersection surfaces, we have the following Theorem
\begin{theorem}\label{Theorem:ExistenceSurfaces}
Let $X=V(f_1, \ldots, f_c) \subset \PP^n$, with $n \geq 4$, be a smooth complete intersection surface
of type $(d_1,\ldots,d_c)$ in $\PP^n$, and set
$\lambda=\sum_{j=1}^c d_j$.
Let $d \geq 0$ be an integer.

Then $X$ admits a foliation with tangent sheaf $i^{\ast}\mathcal{O}_{\PP^n}(1-d)$ if and only if 
$$d \geq \lambda - n + 2,$$
except possibly in the case $d=\lambda-n$. In this exceptional case, the existence of such foliations is not determined by the arguments of the proof.

Whenever such foliations exist, the dimension of the space of sections defining them is given by
$$ h^0 \left(X, \Theta_X \otimes i^{\ast}\mathcal{O}_{\PP^n}(d-1)\right)
= S(n,d,d_1, \ldots, d_c)
+\sum_{s=1}^c (-1)^{s+1} \ h^1(G_s^1(d+n-\lambda)),
$$
where
\begin{align*}
    S(n,d,d_1, \ldots, d_c) :=& \sum_{J \subset \{1, \ldots, c\}} (-1)^{|J|} \Big[
\binom{-\sum_{i\in J} d_i + d + n - \lambda - 1}{1}
\binom{-\sum_{i\in J} d_i + d + 2n - \lambda - 1}{n-1}\\
&-
\sum_{j=1}^c
\binom{-\sum_{i\in J} d_i - d_j + d + 2n - \lambda}{n} \Big], \; \textrm{ and }\\
h^1(G^1_{s}(d+n-\lambda)):=&\sum_{\substack{ J \subset \{1, \ldots, c\} \\
 |J|=s}}
h^1\left( \PP^n,\Omega^1_{\PP^n} \left( - \sum_{i \in J} d_i +d+n-\lambda\right) \right)\\
=& \# \left\{ J \subset \{1, \ldots, c \} \Big|  |J|=s, \sum_{i \in J} d_i=d+n-\lambda \right\},
 \end{align*}
 where the last equality follows from Lemma \ref{LemmBottFla}.
\begin{proof}
By \eqref{eq:FormTanCot},
$$H^0 \left(X, \Theta_X \otimes i^{\ast}\mathcal{O}_{\PP^n}(d-1)\right) \cong H^0\left(X,\Omega^1_X \otimes i^{\ast}\mathcal{O}_{\PP^n}(d+n-\lambda)\right).$$
We will compute the dimension of the latter space:

Tensor the dual sequence of \eqref{eq:NormalSequence} with
$i^{\ast}\mathcal{O}_{\PP^n}(d+n-\lambda)$ to obtain the following exact sequence:
$$
0 \to \bigoplus_{j=1}^c i^{\ast} \mathcal{O}_{\PP^n}(-d_j+d+n-\lambda)  
 \to i^{\ast}\Omega^1_{\PP^n}(d+n-\lambda) 
 \to \Omega^1_X \otimes i^{\ast} \mathcal{O}_{\PP^n}(d+n-\lambda) \to 0.
$$
Since $\sum_{j=1}^c h^1(X,  i^{\ast} \mathcal{O}_{\PP^n}(-d_j+d+n-\lambda))=0$ by Proposition \ref{Prop:RestrictedBottFirst}, 
its
associated cohomology sequence 
shows that 
\begin{equation}\label{GoalSeq2}
    h^0(\Omega^1_X \otimes i^{\ast} \mathcal{O}_{\PP^n}(d+n-\lambda))=h^0(i^{\ast}\Omega^1_{\PP^n}(d+n-\lambda))-\sum_{j=1}^c h^0(i^{\ast} \mathcal{O}_{\PP^n}(-d_j+d+n-\lambda)).
\end{equation}
Its second term is computed through Proposition \ref{Prop:RestrictedBottFirst}:
$$
\sum_{j=1}^c h^0(X,  i^{\ast} \mathcal{O}_{\PP^n}(-d_j+d+n-\lambda))=\sum_{j=1}^c \sum_{J \subset \{1, \ldots, c\}}(-1)^{|J|} \ \binom{-\sum_{i \in J}d_i -d_j+d+2n-\lambda}{n}.
$$
To compute the first term of \eqref{GoalSeq2}, consider
$$0 \to \Omega^1_{\PP^n}(d+n-\lambda) \otimes \mathcal{I}_X \to \Omega^1_{\PP^n}(d+n-\lambda) \to i_{\ast} i^{\ast} \Omega^1_{\PP^n}(d+n-\lambda) \to 0.$$
Since $d+n-\lambda \neq 0$, Lemma \ref{LemmBottFla}
gives the following part of
its associated cohomology sequence:
\begin{equation*}
    \begin{split}
        0 \to H^0(\Omega^1_{\PP^n}(d+n-\lambda) \otimes \mathcal{I}_X) \to H^0(\Omega^1_{\PP^n}(d+n-\lambda)) \to H^0(i_{\ast} i^{\ast} \Omega^1_{\PP^n}(d+n-\lambda)) \to&\\
        \to H^1(\Omega^1_{\PP^n}(d+n-\lambda) \otimes \mathcal{I}_X) \to 0,&
    \end{split}
\end{equation*}
from which we conclude that
\begin{align*}
    h^0(X, i^{\ast} &\Omega^1_{\PP^n}(d+n-\lambda))\\
    =& h^1(\Omega^1_{\PP^n}(d+n-\lambda) \otimes \mathcal{I}_X)-h^0(\Omega^1_{\PP^n}(d+n-\lambda) \otimes \mathcal{I}_X)+h^0(\Omega^1_{\PP^n}(d+n-\lambda))\\
    =& \sum_{s=0}^c (-1)^{s} \ h^0 \left(\Omega^1_{\PP^n} \left(- \sum_{i \in J} d_i +d+n-\lambda \right)\right)+\sum_{s=1}^c (-1)^{s+1} \ h^1 \left(\Omega^1_{\PP^n} \left(- \sum_{i \in J} d_i +d+n-\lambda\right)\right),
\end{align*}
where the last equality follows from Proposition \ref{Prop:h0Minush1p1}.

Substituting these identities into \eqref{GoalSeq2}, 
taking note that the values of the first sum above can be computed through Lemma \ref{LemmBottFla},
we finally obtain that
\begin{align*}
    h^0(X&,\Omega^1_X \otimes i^{\ast}\mathcal{O}_{\PP^n}(d+n-\lambda))\\
    =&h^0(i^{\ast}\Omega^1_{\PP^n}(d+n-\lambda))-\sum_{j=1}^c h^0(i^{\ast}\mathcal{O}_{\PP^n}(-d_j+d+n-\lambda))\\
    =&\sum_{J\subset \{1,\ldots,c\}} (-1)^{|J|} \ \Big[ \binom{-\sum_{i\in J}d_i+d+2n-\lambda-1}{n-1}\binom{-\sum_{i\in J}d_i+d+n-\lambda-1}{1}\\
    & -\sum_{j=1}^c  \binom{-\sum_{i\in J}d_i-d_j+d+2n-\lambda}{n} \Big] +\sum_{s=1}^c (-1)^{s+1} \ h^1(G_s^1(d+n-\lambda)),
\end{align*}
where 
$$h^1(G_s^1(d+n-\lambda))= h^1 \left(\Omega^1_{\PP^n} \left(- \sum_{i \in J} d_i +d+n-\lambda\right)\right),$$
which is the formula from the statement.

Finally, Proposition \ref{LemmBottFla} shows that
every summand in the expression above vanishes for
$d<\lambda-n+2$, whereas for
$d\geq \lambda-n+2$
the contribution corresponding to
$J=\varnothing$
is nonzero. Hence
\begin{equation}\label{eq:Injective}
    h^0(X,\Theta_X\otimes i^*\mathcal O_{\PP^n}(d-1))>0
\textup{ if and only if }
d\geq \lambda-n+2,
\end{equation}
except possibly in the case $d=\lambda-n$. Since foliations with tangent sheaf
$i^*\mathcal O_{\PP^n}(1-d)$
correspond to nonzero sections of $
H^0(X,\Theta_X\otimes i^*\mathcal O_{\PP^n}(d-1))
$ up to scalar multiplication, the result follows
\end{proof}
\end{theorem}

\begin{corollary}\label{PPSforCISurfaces}
Let $X=V(f_1, \ldots, f_c) \subset \PP^n$, with $n \geq 3$, be a smooth complete intersection surface
of type $(d_1,\ldots,d_c)$.
If $X$ is invariant by a foliation by curves with tangent 
sheaf $i^{\ast}\mathcal{O}_{\PP^n}(1-d)$, then
$$
\deg(X)\leq \left(\frac{d+n-2}{c}\right)^c.
$$
Moreover, when $n=3$, the converse also holds.
\begin{proof}
Recall that the degree of
$X$
is given by $\deg(X)=\prod_{i=1}^c d_i$. Hence, by the arithmetic-geometric mean inequality,
$$
\lambda = \sum_{i=1}^c d_i \geq  c\,\sqrt[c]{\deg(X)}.
$$
Since $X$ is invariant by a foliation with tangent sheaf
$i^{\ast}\mathcal{O}_{\PP^n}(1-d)$, Theorem
\ref{Theorem:ExistenceSurfaces} (or Proposition
\ref{Prop:neq3new} when $n=3$) yields
\[
d+n-2 \geq \lambda \geq c\,\sqrt[c]{\deg(X)},
\]
and the claimed inequality follows.

If $n=3$, then $c=1$ and the above inequality becomes
\[
\deg(X)\leq d+1.
\]
The converse follows directly from Proposition
\ref{Prop:neq3new}.
\end{proof}
\end{corollary}

\section{Degree of the Singular Scheme}\label{Sec:Degree}
Let $X$ be a compact complex surface and let
$s \in H^0(X, \Theta_X \otimes \mathscr{L}^{\vee})$ be a section with isolated singularities. The section $s$ induces a foliation $\mathscr{F}=[s]$ on $X$. If $q \in \mathrm{Sing}(\mathscr{F})$, its multiplicity is defined by
$$
\mu_q(\mathcal{F})
=
\dim_{\CC}
\left(
\frac{\mathcal{O}_{X,q}}
{(a_i,b_i) \cdot \mathcal{O}_{X,q}}
\right),
$$
where \(a_i\) and \(b_i\) are the local coefficients of a vector field defining
\(\mathcal{F}\) near \(q\).

By \cite[Proposition 2.4]{RuledSurfaces}, the number of singular points of $s$, counted with multiplicities, is given by
$$
\sum_{q \in \mathrm{Sing}(\mathscr{F})} \mu_q(\mathscr{F})=c_2(\Theta_X \otimes \mathscr{L}^{\vee})[X].
$$
We shall refer to this number as the \emph{degree} of the singular scheme $Z_s$ of $s$, and will be denoted by $\mathrm{deg}(Z_s)$.

\medskip
\begin{theorem}\label{eq:NumberofSing}
Let $X = V(f_1,\ldots,f_c) \subset \PP^n$ be a smooth complete intersection surface of type $(d_1, \ldots, d_c)$ in $\PP^n$ and set $\lambda=\sum_{j=1}^c d_j$.
Then, for any section 
$s \in H^0(X, \Theta_X \otimes i^*\mathcal{O}_{\PP^n}(d-1))$ with isolated singularities, the degree of its singular scheme $Z_s$ is
$$\mathrm{deg}(Z_s)=\prod_{i=1}^c d_i
\left[
\binom{n+1}{2}
+\sum_{i<j} d_i d_j
+ \sum_{i=1}^c d_i^2
-(n+d)\sum_{i=1}^c d_i
+(n+1)(d-1)
+(d-1)^2
\right].$$
Moreover, $ \mathrm{deg}(Z_s) > 0 $ for 
$d \geq \sum_{i=1}^c d_i-n+2 = \lambda-n+2$, 
and hence every foliation with isolated singularities
as in Theorem \ref{Theorem:ExistenceSurfaces}
has a nonempty singular scheme.

\end{theorem}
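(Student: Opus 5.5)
The plan is to compute $c_2(\Theta_X\otimes\mathscr L^\vee)[X]$ for $\mathscr L^\vee=i^*\mathcal O_{\PP^n}(d-1)$ and then use \cite[Proposition 2.4]{RuledSurfaces}, which says this number equals $\deg(Z_s)$. Write $h=c_1(i^*\mathcal O_{\PP^n}(1))$, so that $h^2[X]=\deg X=\prod_{i=1}^c d_i$.

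\textbf{Chern classes of $\Theta_X$.} From the normal sequence \eqref{eq:NormalSequence} and the Euler sequence, Whitney's formula gives
\[
c(\Theta_X)=\frac{(1+h)^{n+1}}{\prod_{i=1}^c(1+d_ih)} .
\]
Expanding to second order,
\[
c_1(\Theta_X)=(n+1-\lambda)h,\qquad
c_2(\Theta_X)=\Bigl[\tbinom{n+1}{2}-(n+1)\lambda+\sum_{i\le j}d_id_j\Bigr]h^2 ,
\]
where the complete homogeneous sum $\sum_{i\le j}d_id_j$ equals $\sum_{i<j}d_id_j+\sum_i d_i^2$.

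\textbf{Twisting.} For a rank-two bundle $E$ and a line bundle $N$ one has $c_2(E\otimes N)=c_2(E)+c_1(E)c_1(N)+c_1(N)^2$. Apply this with $c_1(N)=(d-1)h$. The bracket becomes
\[
\tbinom{n+1}{2}+\sum_{i<j}d_id_j+\sum_i d_i^2-(n+1)\lambda+(n+1-\lambda)(d-1)+(d-1)^2 .
\]
Since $-(n+1)\lambda-\lambda(d-1)=-(n+d)\lambda$, this is exactly the stated expression. Multiplying by $h^2[X]=\prod d_i$ gives the formula. This part is routine.

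\textbf{Positivity.} This is the main obstacle. The cleaner way is to pass to the twisted cotangent bundle via \eqref{eq:FormTanCot}: $\Theta_X(d-1)\cong\Omega^1_X(k)$ with $k=d+n-\lambda\ge2$. Then the bracket equals
\[
Q(k)=k^2+(\lambda-n-1)k+C,\qquad C=\tbinom{n+1}{2}-(n+1)\lambda+\sum_{i\le j}d_id_j ,
\]
and the goal is $Q(k)>0$ for all $k\ge2$.

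First I would try a direct algebraic proof. Note that $Q(k)-Q(2)=(k-2)(k+\lambda-n+1)$. Because $\lambda\ge 2c=2(n-2)$, we have $\lambda-n+1\ge n-3\ge0$, so $Q$ is nondecreasing for $k\ge 2$. It therefore suffices to show $Q(2)>0$. For this I would substitute $d_i=2+a_i$ with $a_i\ge0$ and $c=n-2$, and expand $Q(2)$ as a polynomial in the $a_i$ and $n$. The hope is that the coefficients are nonnegative and the constant term is positive; as a sanity check, the quadric in $\PP^3$ gives $Q(2)=2$.

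If the bookkeeping fails, the fallback is geometric. $\Omega^1_{\PP^n}(2)$ is globally generated, so its quotient $\Omega^1_X(k)$ is globally generated for $k\ge2$, and then $c_2\ge0$. If $c_2=0$, a general section would be nowhere vanishing, giving an exact sequence
\[
0\to\mathcal O_X\to\Omega^1_X(k)\to i^*\mathcal O_{\PP^n}(\lambda-n-1+2k)\to0 .
\]
This forces $c_2=0$ but the sequence would have to split or otherwise contradict the computed $h^0$ from Theorem~\ref{Theorem:ExistenceSurfaces}. I would try to rule that out, but I expect the algebraic route to be the one that actually succeeds.
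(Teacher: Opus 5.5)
Your proposal is correct and follows the paper's proof. The Chern class computation via the normal sequence and $c_2(E\otimes N)=c_2(E)+c_1(E)c_1(N)+c_1(N)^2$ is exactly the paper's argument. Your positivity argument, namely monotonicity from $Q(k)-Q(2)=(k-2)(k+\lambda-n+1)$ together with positivity at the endpoint, is the paper's ``$P(\lambda-n+2)>0$ and $P'(d)>0$'' rewritten in the variable $k=d+n-\lambda$.

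The expansion you hoped for does close the endpoint case. With $d_i=2+a_i$, $a_i\ge 0$, and $c=n-2$, one gets $Q(2)=\tfrac{c^2+c+2}{2}+(c+1)\sum_i a_i+\sum_{i\le j}a_ia_j>0$, so the geometric fallback is unnecessary.
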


\begin{proof} 
The proof of the last statement is elementary: denote by $ P(d) $ the polynomial
between brackets in the expression above: one easily sees that $ P(\lambda-n+2) > 0 $
and moreover, that its derivative $ P'(d) $ is positive as well, for 
$ d \geq \lambda-n+2 $. The conclusion follows.
Now we come to the computation of the value of $ \mathrm{deg}(Z_s) $:

Let $\mathscr{L} = i^*\mathcal{O}_{\PP^n}(1-d)$ and $H := c_1(\mathcal{O}_X(1)) = c_1(i^*\mathcal{O}_{\PP^n}(1))$. Since $\Theta_X$ has rank $2$, applying $V = \Theta_X$ and $L = \mathscr{L}^{\vee}$ to equation (2.6) in \cite[p.~28]{Friedman}, we obtain
\begin{equation}\label{eq:SecChernClass_thm}
c_2(\Theta_X \otimes \mathscr{L}^{\vee})
=
c_2(\Theta_X) + c_1(\Theta_X)c_1(\mathscr{L}^{\vee}) + c_1(\mathscr{L}^{\vee})^2.
\end{equation}
By the canonical bundle formula \eqref{CanonicalBundle},
$$
c_1(\Theta_X) = \left(n+1 - \sum_{i=1}^c d_i\right)H.
$$
Moreover, the normal exact sequence \eqref{eq:NormalSequence}
and the Whitney sum formula (see \cite[p.~51]{Fulton}) yield
$$
c_2(\Theta_X)
=
c_2(i^*\Theta_{\PP^n})
- c_1(\Theta_X)\, c_1\!\left(\bigoplus_{i=1}^c i^*\mathcal{O}_{\PP^n}(d_i)\right)
- c_2\!\left(\bigoplus_{i=1}^c i^*\mathcal{O}_{\PP^n}(d_i)\right).
$$
Using
$$
c_1(i^*\Theta_{\PP^n}) = (n+1)H,
\quad
c_2(i^*\Theta_{\PP^n}) = \binom{n+1}{2}H^2,
$$
and
$$
c_1\!\left(\bigoplus_{i=1}^c i^*\mathcal{O}_{\PP^n}(d_i)\right)
=
\left(\sum_{i=1}^c d_i\right)H,
$$
$$
c_2\!\left(\bigoplus_{i=1}^c i^*\mathcal{O}_{\PP^n}(d_i)\right)
=
\left(\sum_{i<j} d_i d_j\right)H^2,
$$
we obtain
\begin{align*}
c_2(\Theta_X)
&=
\binom{n+1}{2}H^2
-
\left(n+1-\sum_{i=1}^c d_i\right)\left(\sum_{i=1}^c d_i\right)H^2
-
\left(\sum_{i<j} d_i d_j\right)H^2 \\
&=
\left[
\binom{n+1}{2}
-(n+1)\sum_{i=1}^c d_i
+\left(\sum_{i=1}^c d_i\right)^2
-\sum_{i<j} d_i d_j
\right] H^2.
\end{align*}
Since
$$
\left(\sum_{i=1}^c d_i\right)^2
=
\sum_{i=1}^c d_i^2 + 2\sum_{i<j} d_i d_j,
$$
it follows that
$$
c_2(\Theta_X)
=
\left[
\binom{n+1}{2}
-(n+1)\sum_{i=1}^c d_i
+ \sum_{i=1}^c d_i^2
+ \sum_{i<j} d_i d_j
\right] H^2.
$$
Since $c_1(\mathscr{L}^{\vee}) = (d-1)H$, substituting into \eqref{eq:SecChernClass_thm} gives
$$
c_2(\Theta_X \otimes \mathscr{L}^{\vee})
=
\left[
\binom{n+1}{2}
+\sum_{i<j} d_i d_j
+ \sum_{i=1}^c d_i^2
-(n+d)\sum_{i=1}^c d_i
+(n+1)(d-1)
+(d-1)^2
\right] H^2.
$$
Finally, since $X$ is a complete intersection surface, we have
$$
H^2[X] = \deg(X) = \prod_{i=1}^c d_i
$$
(see \cite[p.~369]{MarcioAnn}), and the result follows.
\end{proof}

\section{Foliations and their singular schemes}\label{Sec:SingScheme}


Let $X=V(f_1, \ldots, f_c) \subset \PP^n$ be a smooth complete intersection surface of type $(d_1, \ldots, d_c)$.
As in Theorem \ref{Theorem:ExistenceSurfaces}, let
$\lambda=\sum_{j=1}^c d_j$, let $d$ be an integer such that $d\geq \lambda-n+2$
and consider a nonzero section
$s \in H^0(X, \Theta_X \otimes i^{\ast} \mathcal{O}_{\PP^n}(d-1))$   
with isolated singularities, and singular scheme $Z=Z_s$.
In this section we will prove Theorem \textbf{B} from the Introduction
(Theorem \ref{Th:RigiditySurfaces} below), which is the effective version of 
{\cite[Theorem 2.2]{CampilloOlivares}} for this class of manifolds $ X $ and sections
$s$.

The Koszul complex associated to $s$, see \cite[p.76]{RRAlgebra}, provides the following
resolution of the ideal sheaf $\mathcal{I}_Z$ of $ Z $:
\begin{equation*}
    0 \longrightarrow \bigwedge^2 \left(\Omega^1_X \otimes i^{\ast} \mathcal{O}_{\PP^n}(1-d) \right)  \overset{i_s}{\longrightarrow} \Omega^1_X \otimes i^{\ast} \mathcal{O}_{\PP^n}(1-d) \overset{i_s}{\longrightarrow} \mathcal{I}_Z \longrightarrow 0,
\end{equation*}
where $i_s$ denotes contraction by the section $s$.
Since $\omega_X \cong i^{\ast} \mathcal{O}_{\PP^n}\left(-n-1+\sum_{j=1}^c d_j\right)$ by \eqref{CanonicalBundle}, the Koszul complex
can be rewritten as
\begin{equation*}
    0 \longrightarrow i^{\ast} \mathcal{O}_{\PP^n}(\lambda -n-1)   \otimes i^{\ast} \mathcal{O}_{\PP^n}(1-d)^{\otimes 2} \overset{i_s}{\longrightarrow} \Omega^1_X \otimes i^{\ast} \mathcal{O}_{\PP^n}(1-d) \overset{i_s}{\longrightarrow} \mathcal{I}_Z \longrightarrow 0.
\end{equation*}
Tensoring it with $\Theta_X \otimes i^{\ast} \mathcal{O}_{\PP^n}(d-1)$ gives the exact
sequence
\begin{equation*}
    0 \longrightarrow \Theta_X \otimes i^{\ast} \mathcal{O}_{\PP^n}(\lambda -d-n)   
    \overset{i_s \otimes \mathrm{id}}{\longrightarrow}
    \mathrm{End}(\Theta_X) \overset{i_s \otimes \mathrm{id}}{\longrightarrow} \Theta_X \otimes i^{\ast} \mathcal{O}_{\PP^n}(d-1) \otimes \mathcal{I}_Z \longrightarrow 0,
\end{equation*}
whose cohomology sequence
\begin{equation}\label{fundamental}
\begin{array}{c@{\hspace{1pt}}c@{\hspace{1pt}}c@{\hspace{1pt}}c@{\hspace{1pt}}  c@{\hspace{1pt}}c@{\hspace{1pt}}c@{\hspace{1pt}}c}
 0 & \to & H^0(X,\Theta_X \otimes i^{\ast} \mathcal{O}_{\PP^n}(\lambda -d-n)) & \to &  H^0(X, \mathrm{End}(\Theta_X)) & 
  \overset{(i_s \otimes \mathrm{id})^0}{\longrightarrow} & 
    H^0(X, \Theta_X\otimes i^\ast\mathcal O_{\PP^n}(d-1)\otimes\mathcal I_Z) & \to \\
     & \to & H^1(X, \Theta_X \otimes i^{\ast} \mathcal{O}_{\PP^n}(\lambda -d-n)) & \to &   H^1(X, \mathrm{End}(\Theta_X)) & \longrightarrow & \cdots & 
    \end{array}
\end{equation}
is fundamental for what follows:

\begin{remark}\label{Remark:CharSections}
Note that
$ H^0(X, \Theta_X\otimes i^\ast\mathcal O_{\PP^n}(d-1)\otimes\mathcal I_Z) $
is the space of sections 
$s'\in H^0(X,\Theta_X\otimes i^\ast\mathcal O_{\PP^n}(d-1))$ that vanish along 
$Z = Z_s $, that
is, sections whose singular scheme satisfies $ Z_{s'} \supset Z_s $. 
Note that $ s' $ may or not have isolated singularities and
moreover that, by construction,
the map $ (i_s \otimes \mathrm{id})^0 $ from \eqref{fundamental} is given by
$ (i_s\otimes\mathrm{id})^0(\phi)=\phi(s) $, for 
$\phi \in H^0(X, \mathrm{End}(\Theta_X))$. Hence, this map is surjective if and only
if every nonzero section 
$ s' \in  H^0(X, \Theta_X\otimes i^\ast\mathcal O_{\PP^n}(d-1)\otimes\mathcal I_Z) $
is of 
the form $s'=\phi(s)$, for some global endomorphism 
$\phi \in H^0(X, \mathrm{End}(\Theta_X))$.

Assume that $ (i_s\otimes\mathrm{id})^0(\phi)=\phi(s) $ is surjective: 

In case $ \Theta_X $ is \textit{simple} (which means that every such $ \phi $
has the form $\phi=\lambda \cdot \mathrm{id}_{\Theta_X}$ for some $\lambda\in\CC$),
one concludes that every nonzero section 
$ s' $ such that $ Z_{s'} \supset Z_s $
satisfies $s'= \lambda \cdot s$. 
This means that $ s $ and $ s' $ define the same foliation $ [s] $ and hence
that this is the \textit{unique} foliation having $ Z_s $ as singular scheme.
That is,
that the foliation $ [s] $ is uniquely determined by its singular scheme. 

In case $ \Theta_X $ is not simple, there may exist foliations $ [s'] \neq [s] $
such that $ Z_{s'} = Z_s $, for some
$ s' \in  H^0(X, \Theta_X\otimes i^\ast\mathcal O_{\PP^n}(d-1)\otimes\mathcal I_Z) $:
different foliations with isolated singularities that share the same singular scheme.
Indeed, any \emph{invertible} global endomorphism $\phi\in H^0(X,\mathrm{End}(\Theta_X))$ induces a section $\phi(s)$ with $Z_{\phi(s)}=Z_s$, and 
whenever $ \phi \neq \lambda \cdot \mathrm{id}_{\Theta_X} $, the foliation
$[\phi(s)]$ is different from $[s]$. 
See Theorem \ref{Thm:ESSifInvertible} below.
\end{remark}
From \eqref{fundamental}, the morphism $(i_s \otimes \mathrm{id})^0$ is surjective
for those values of $ d $ for which
\begin{equation}\label{eq:GoalSingularSchemeSurfaces}
    H^1(X, \Theta_X \otimes i^{\ast} \mathcal{O}_{\PP^n}(\lambda -d-n))=0,
\end{equation}
These values do exist, by {\cite[Theorem 2.2]{CampilloOlivares}}. The content of
Theorem \ref{Th:RigiditySurfaces} below is the explicit computation 
of them.

We start the computation in the case of smooth hypersurfaces and then extend the argument to arbitrary smooth complete intersection surfaces. 

The following lemma 
proves \eqref{eq:GoalSingularSchemeSurfaces} for
$n=3$ and will be incorporated into the proof of
Theorem \ref{Th:RigiditySurfaces}.
\begin{lemma}\label{Lemma:n3}
Let $X=V(f) \subset \PP^3$ be a smooth hypersurface of degree $d_1$, then
\begin{equation*}
    h^1(X, \Theta_X \otimes i^{\ast} \mathcal{O}_{\PP^3}(d_1-d-3))=0, \quad \textup{for } d >2 d_1-3.
\end{equation*}
\begin{proof}
Tensoring the normal sequence \eqref{eq:NormalSequence} with 
$i^{\ast}\mathcal{O}_{\PP^3}(d_1-d-3)$ gives
\begin{equation*}
    0 \to \Theta_X \otimes i^{\ast}\mathcal{O}_{\PP^4}(i^{\ast}\mathcal{O}_{\PP^3}(d_1-d-3))  \to i^{\ast}\Theta_{\PP^3}(d_1 -d -3) \to i^{\ast}\mathcal{O}_{\PP^3}(2 d_1-d-3) \to 0.
\end{equation*}
Taking cohomology in $X$, we obtain
\begin{equation}\label{SurfCohHyper}
    \cdots \to H^0( i^{\ast}\mathcal{O}_{\PP^3}(2 d_1-d-3)) \to H^1(\Theta_X \otimes i^{\ast}\mathcal{O}_{\PP^3}(d_1 -d -3))  \to  H^1( i^{\ast}\Theta_{\PP^3}(d_1-d -3)) \to \cdots
\end{equation}
By Proposition \ref{Prop:RestrictedBottFirst},
\begin{equation}\label{eq:Dimension1RigiditySurfacen3}
    h^0(i^{\ast} \mathcal{O}_{\PP^3}(2 d_1 -d-3))=0, \quad \textup{for } d > 2 d_1-3.
\end{equation}
For the remaining term, since $\mathcal{I}_X \cong \mathcal{O}_{\PP^3}(-d_1)$ by 
\eqref{KoszCxSh}, then Proposition \ref{PropHart} gives the short exact sequence 
in $\PP^3$
$$0 \to \Theta_{\PP^3}(-d-3) \to \Theta_{\PP^3}(d_1-d-3) \to i_{\ast}i^{\ast}\Theta_{\PP^3}(d_1-d-3) \to 0.$$
The cohomology sequence associated to the previous sequence is given by
$$\cdots \to H^1(\Theta_{\PP^3}(d_1-d-3)) \to H^1(i_{\ast}i^{\ast}\Theta_{\PP^3}(d_1-d-3)) \to H^2(\Theta_{\PP^3}(-d-3)) \to \cdots,$$
where $H^1(\PP^3, i_{\ast}i^{\ast}\Theta_{\PP^3}(d_1-d-3)) \cong H^1(X,i^{\ast}\Theta_{\PP^3}(d_1-d-3))$. By Serre duality and 
Lemma \ref{LemmBottFla} we have that
$$h^1(\Theta_{\PP^3}(d_1-d-3))=h^2(\Omega^1_{\PP^3}(d-d_1-1))=0,$$
and
$$h^{2}( \Theta_{\PP^3}(-d-3)) = h^1(\Omega^1_{\PP^n}(d-1))=\begin{cases}
     1, & \text{if } d=1,\\
     0, & \text{otherwise},
    \end{cases}$$
concluding that
\begin{equation}\label{eq:Dimension2RigiditySurfacen3}
    h^1(i_{\ast} i^{\ast} \Theta_{\PP^3}(d_1-d-3))=0, \quad \textup{for } d \neq 1.
\end{equation}
Combining \eqref{eq:Dimension1RigiditySurfacen3} and \eqref{eq:Dimension2RigiditySurfacen3} with \eqref{SurfCohHyper} yields
\begin{equation*}
    h^1(X, \Theta_X \otimes i^{\ast} \mathcal{O}_{\PP^3}(d_1-d-3))=0, \quad \textup{for } d >2 d_1-3,
\end{equation*}
since $1 \leq 2 d_1-3$.
\end{proof}
\end{lemma}

\medskip
The result for $X$ of arbitrary codimension is presented in the following theorem:
\begin{theorem}\label{Th:RigiditySurfaces}
Let $ X=V(f_1, \ldots, f_c) \subset \PP^n $, $n \geq 3$, 
be a smooth complete intersection surface of type $(d_1, \ldots, d_c)$ in $\PP^n$ with $1 < d_1 \leq \cdots \leq d_c$, and set 
$\mathscr{L}_d=i^{\ast} \mathcal{O}_{\PP^n}(1-d)$ with $d \geq \lambda -n+2$.

Let $ \mathscr{F} = [s] \in \mathscr{F}ol(\mathscr{L}_d, X ) $ be a foliation with isolated singularities and satisfying that 
$$ d > \lambda + d_c-n.$$
Then every nonzero section 
$ s' \in H^0(X, \Theta_X \otimes i^{\ast} \mathcal{O}_{\PP^n}(d-1)) $ 
satisfies that $Z_{s'} \supset Z_s$ if and only if 
there exists a unique global endomorphism $\phi$ of $\Theta_X$ such that $s'=\phi(s)$.
\begin{proof}
The case $n=3$ follows from Lemma \ref{Lemma:n3}. Hence, assume that $n\ge4$.

Tensoring the normal sequence \eqref{eq:NormalSequence} with $i^{\ast}\mathcal{O}_{\PP^n}(\lambda-d-n)$ to obtain the sequence on $X$
\begin{equation*}
    0 \to \Theta_X \otimes i^{\ast}\mathcal{O}_{\PP^n}(\lambda -d -n)  \to i^{\ast}\Theta_{\PP^n}(\lambda -d -n) \to \bigoplus_{j=1}^c i^{\ast}\mathcal{O}_{\PP^n}(\lambda +d_j-d-n) \to 0.
\end{equation*}
Its associated cohomology sequence contains
\begin{equation}\label{SurfCoh}
    \bigoplus_{j=1}^c H^0( i^{\ast}\mathcal{O}_{\PP^n}(\lambda +d_j-d-n)) \to H^1(\Theta_X \otimes i^{\ast}\mathcal{O}_{\PP^n}(\lambda -d -n))  \to  H^1( i^{\ast}\Theta_{\PP^n}(\lambda -d -n)).
\end{equation}
By Proposition \ref{Prop:RestrictedBottFirst}
$$h^0( i^{\ast}\mathcal{O}_{\PP^n}(\lambda +d_{j}-d-n))=0,$$
whenever $d > \lambda +d_{j} -n$. Since $d_j \leq d_c$, our hypothesis implies
\begin{equation}\label{eq:Dimension1RigiditySurfaces}
    \sum_{j=1}^c h^0(i^{\ast}\mathcal{O}_{\PP^n}(\lambda +d_j-d-n))=0, \quad \textup{for } d > \lambda +d_c -n.
\end{equation}
For the remaining term, by Proposition
$$0 \to \Theta_{\PP^n}(\lambda -d-n) \otimes \mathcal{I}_X \to \Theta_{\PP^n}(\lambda -d-n)\to i_{\ast}i^{\ast}\Theta_{\PP^n}(\lambda -d-n) \to 0,$$
whose associated sequence in cohomology contains
$$H^1( \Theta_{\PP^n}(\lambda-d-n))  \to  H^1( i_{\ast} i^{\ast} \Theta_{\PP^n}(\lambda-d-n) ) \to  H^2( \Theta_{\PP^n}(\lambda-d-n) \otimes \mathcal{I}_X)
     \to H^2( \Theta_{\PP^n}(\lambda-d-n))$$
Using Serre duality together with Lemma \ref{LemmBottFla}, we conclude that
\begin{align*}
    &h^1(\Theta_{\PP^n}(\lambda -d-n))=h^{n-1}(\Omega^1_{\PP^n}(d-\lambda-1))=0, \textup{ and}\\
    &h^2(\Theta_{\PP^n}(\lambda -d-n))=h^{n-2}(\Omega^1_{\PP^n}(d-\lambda-1))=0,
\end{align*}
since $n \geq 4$. Consequently
$$H^1(i_{\ast} i^{\ast} \Theta_{\PP^n}(\lambda-d-n) ) \cong H^2( \Theta_{\PP^n}(\lambda-d-n) \otimes \mathcal{I}_X).$$
To establish the vanishing of the latter group, tensor the first sequence in \eqref{eq:SystemIdeal} with $\Theta_{\PP^n} (\lambda -d-n)$ to obtain
\begin{equation}\label{eq:ProofRigiditySurfaces1}
    0 \to \mathrm{Im}\ \varphi_2 \otimes \Theta_{\PP^n} (\lambda -d-n) 
    \to \bigoplus_{j=1}^c \Theta_{\PP^n}(\lambda-d-n-d_j) 
    \to \Theta_{\PP^n} (\lambda -d-n) \otimes \mathcal{I}_X \to 0.
\end{equation}
By Serre duality and Lemma \ref{LemmBottFla}, we have the vanishings
$$h^2(\Theta_{\PP^n}(\lambda -d-n-d_j))=h^{n-2}(\Omega^1_{\PP^n}(d+d_j-\lambda -1))=0$$
for each $j=1, \ldots, c$, since $n \geq 4$. Therefore, the cohomology sequence associated to \eqref{eq:ProofRigiditySurfaces1} contains
\begin{equation}\label{eq:ProofRigiditySurfaces2}
    0 \to H^2(\Theta_{\PP^n} (\lambda -d-n) \otimes \mathcal{I}_X) \to H^3(\mathrm{Im} \ \varphi_2 \otimes \Theta_{\PP^n} (\lambda -d-n)) \to \bigoplus_{j=1}^c H^3( \Theta_{\PP^n}(\lambda-d-n-d_j))
\end{equation}
Replacing $k=2, \ldots, n-4$ in Lemma \ref{Lemma:Claim} below, we conclude the isomorphisms
$$H^3(\mathrm{Im}\ \varphi_{2} \otimes \Theta_{\PP^n} (\lambda -d-n)) \cong H^{n-2}(\mathrm{Im}\ \varphi_{c-1} \otimes \Theta_{\PP^n} (\lambda -d-n)).$$
Finally, tensoring the last sequence in \eqref{eq:SystemIdeal} with $\Theta_{\PP^n} (\lambda -d-n)$ to obtain
\begin{equation}\label{eq:ProofRigiditySurfaces3}
\begin{split}
    0 \to \Theta_{\PP^n} (-d-n) 
    \to \bigoplus_{\substack{ J \subset \{1, \ldots, c\} \\ |J|=c-1}} \Theta_{\PP^n} &\left(- \sum_{i \in J} d_i + \lambda -d-n \right)
    \to\\
    & \to \mathrm{Im}\ \varphi_{c-1} \otimes \Theta_{\PP^n} (\lambda -d-n) \to 0.
\end{split}
\end{equation}
Apply Serre duality and Lemma \ref{LemmBottFla} to obtain the vanishings 
$$h^{n-2}\left( \Theta_{\PP^n} \left(-\sum_{i \in J}d_i+\lambda -d-n \right) \right)=h^{2}\left( \Omega^1_{\PP^n} \left(\sum_{i \in J}d_i+d- \lambda-1 \right) \right)=0$$
for all $J \subset \{1, \ldots, c\}$ with $|J|=c-1$. Therefore, the cohomology sequence associated to \eqref{eq:ProofRigiditySurfaces3} is given by 
\begin{align*}
    0 \to H^{n-2}(\mathrm{Im}\ \varphi_{c-1} \otimes \Theta_{\PP^n} (\lambda -d-n))& \to H^{n-1}(\Theta_{\PP^n}(-d-n)) \to \\
    & \to \bigoplus_{\substack{ J \subset \{1, \ldots, c\} \\ |J|=c-1}} H^{n-1} \left(\Theta_{\PP^n} \left(- \sum_{i \in J} d_i + \lambda -d-n \right) \right) \to \cdots.
\end{align*}
By Serre duality and Lemma \ref{LemmBottFla}
$$h^{n-1}( \Theta_{\PP^n}(-d-n)) = h^1(\Omega^1_{\PP^n}(d-1))=\begin{cases}
     1, & \text{if } d=1,\\
     0, & \text{otherwise}.
    \end{cases}$$
Hence 
\begin{equation*}
    h^{3}(\mathrm{Im} \ \varphi_{2} \otimes \Theta_{\PP^n}(\lambda-d-n))=h^{n-2}(\mathrm{Im} \ \varphi_{c-1} \otimes \Theta_{\PP^n}(\lambda-d-n))=0, \quad \textup{for } d \neq 1.
\end{equation*}
Substituting this into \eqref{eq:ProofRigiditySurfaces2}, we conclude that
\begin{equation}\label{eq:Dimension2RigiditySurfaces}
    h^1(i_{\ast}i^{\ast} \Theta_{\PP^n}(\lambda-d-n)) =h^2(\Theta_{\PP^n}(\lambda-d-n) \otimes \mathcal{I}_X)=0, \quad \textup{for } d \neq 1.
\end{equation}
Combining the vanishings \eqref{eq:Dimension1RigiditySurfaces} and \eqref{eq:Dimension2RigiditySurfaces} with \eqref{SurfCoh}, we conclude that 
$$
H^1(X,\Theta_X \otimes i^{\ast}\mathcal{O}(\lambda-d-n))=0, \quad 
 \textup{for } d>\lambda +d_c-n,
$$ 
since $1<\lambda+d_c-n$. 

It follows from \eqref{eq:GoalSingularSchemeSurfaces} that
the morphism $(i_s \otimes \mathrm{id})^0$ in \eqref{fundamental} is surjective
for these values of $ d $.

We see that it is also injective from \eqref{fundamental} since,
replacing $d$ for $\lambda-d-n+1$ into \eqref{eq:Injective}, we see that
$H^0(X, \Theta_X \otimes i^{\ast}\mathcal{O}_{\PP^n}(\lambda-d-n))=0$, for 
$d > \lambda +d_c-n$.
It follows that the morphism $(i_s \otimes \mathrm{id})^0$ in \eqref{fundamental} is an isomorphism for $d > \lambda +d_c-n$, concluding the uniqueness of the global endomorphism $\phi$, for $ n\geq 4$.

For $ n=3 $ and $ d > 2 d_1 - 3$, Lemma \ref{Lemma:n3} shows that 
$ (i_s \otimes \mathrm{id})^0 $ is 
surjective and, replacing $d$ for $ d_1 - d - 2 $ into \eqref{eq:Injectiven3}, we
see that $H^0(X, \Theta_X \otimes i^{\ast}\mathcal{O}_{\PP^3}(d_1-d-3))=0$ and
hence, that it is also injective.
\end{proof}
\end{theorem}

Now we prove the claim made during  
the proof of Theorem \ref{Th:RigiditySurfaces}:
\begin{lemma}\label{Lemma:Claim}
Under the hypotheses of Theorem \ref{Th:RigiditySurfaces}:
$$H^{k+1}(\PP^n,\mathrm{Im}\ \varphi_k \otimes \Theta_{\PP^n} (\lambda -d-n)) \cong H^{k+2}(\PP^n,\mathrm{Im}\ \varphi_{k+1} \otimes \Theta_{\PP^n} (\lambda -d-n))$$
for $k=0, \ldots, n-4$.  
\begin{proof}
Tensor the $k$-th short exact sequence in \eqref{eq:SystemIdeal} with the sheaf $\Theta_{\PP^n} (\lambda -d-n)$ to obtain
\begin{align*}
    0 \to \mathrm{Im} \ \varphi_{k+1} \otimes \Theta_{\PP^n} (\lambda -d-n) \to \bigoplus_{\substack{ J \subset \{1, \ldots, c\} \\ |J|=k}} \Theta_{\PP^n} \left(- \sum_{i \in J} d_i + \lambda -d-n \right) \to&\\
    \to \mathrm{Im} \ \varphi_{k} \otimes \Theta_{\PP^n} (\lambda -d-n) \to 0,&
\end{align*}
whose associated cohomology sequence is 
\begin{equation*}
    \begin{split}
    \cdots & \to  \bigoplus_{\substack{ J \subset \{1, \ldots, c\} \\ |J|=k}} H^{k+1}\left( \Theta_{\PP^n} \left(- \sum_{i \in J} d_i + \lambda -d-n \right) \right)  \to   H^{k+1}(\mathrm{Im} \ \varphi_{k} \otimes \Theta_{\PP^n} (\lambda -d-n))  \to\\
         & \to  H^{k+2}(\mathrm{Im} \ \varphi_{k+1} \otimes \Theta_{\PP^n} (\lambda -d-n)) \to  \bigoplus_{\substack{ J \subset \{1, \ldots, c\} \\ |J|=k}} H^{k+2}\left( \Theta_{\PP^n} \left(- \sum_{i \in J} d_i + \lambda -d-n \right) \right)  \to  \cdots
\end{split}
\end{equation*}
By Serre duality, for each selection of $J \subset \{1, \ldots, c \}$ with $|J|=k$
\begin{align*}
    &h^{k+1}\left( \Theta_{\PP^n} \left(- \sum_{i \in J} d_i + \lambda -d-n \right) \right) = h^{n-k-1}\left( \Omega^1_{\PP^n} \left( \sum_{i \in J} d_i - \lambda +d-1 \right) \right) = 0, \textrm{ and} \\
    &h^{k+2}\left( \Theta_{\PP^n} \left(- \sum_{i \in J} d_i + \lambda -d-n \right) \right) = H^{n-k-2}\left( \Omega^1_{\PP^n} \left( \sum_{i \in J} d_i - \lambda +d-1 \right) \right)= 0,
\end{align*}
since $k=0, \ldots, n-4$, concluding the isomorphism of the statement.
\end{proof}
\end{lemma}

The following Corollaries of Theorem \ref{Th:RigiditySurfaces} come from 
Remark \ref{Remark:CharSections} and from 
the well-known fact \cite[Corollary 1.28]{GeomModuliSpaces} 
that stable bundles are 
simple:
\begin{corollary}\label{Cor:EndoHyperP3}
    Let $X=V(f) \subset \PP^3$ be a smooth hypersurface of degree $d_1 \geq 3$. Then, any foliation on $X$ with isolated singularities and tangent sheaf $i^{\ast}\mathcal{O}_{\PP^3}(1-d)$ with $d > 2d_1-3$, is uniquely determined by its singular scheme.
 \begin{proof}
    $\Theta_X$ is stable by \cite[Théorème]{Fah}, for $d_1=3$ and by 
    \cite[Proposition 3.5.1]{PhdThesisLiu}, for $d_1 \geq 4$. 
 \end{proof}
\end{corollary}
 For a different proof, see \cite[Corollary 3.9]{B}.
\begin{corollary}\label{Cor:Endo22}
    Let $X=V(f_1,f_2) \subset \PP^4$ be a smooth complete intersection surface of type $(2,2)$. Then, any foliation on $X$ with isolated singularities and tangent sheaf $i^{\ast}\mathcal{O}_{\PP^3}(1-d)$ with $d > 2$, is uniquely determined by its singular scheme.
\begin{proof}
    $\Theta_X$ is stable by \cite[Théorème]{Fah}.
\end{proof}
\end{corollary}
 Finally, in concern with foliations that have the same singular scheme, we have the
 following result:
\begin{theorem}\label{Thm:ESSifInvertible}
    Let $ X=V(f_1, \ldots, f_c) \subset \PP^n $, $n \geq 3$, 
be a smooth complete intersection surface of type $(d_1, \ldots, d_c)$ in $\PP^n$ with 
$ 1 < d_1 \leq \cdots \leq d_c $, and let 
$ s \in H^0(X, \Theta_X \otimes i^{\ast} \mathcal{O}_{\PP^n}(d-1))$ 
be a nonzero section with isolated singularities and singular scheme $ Z_s $. 
Assume $ d > \lambda + d_c-n $, and let 
$ s' \in H^0(X, \Theta_X \otimes i^{\ast} \mathcal{O}_{\PP^n}(d-1)) $ be a global section
such that that $ Z_{s'} \supset Z_s $ and let 
$ \phi \in H^{0}(X, \mathrm{End}(\Theta_X)) $ be the global endomorphism such that
$ s'= \phi(s) $. If $ \phi $ is invertible then $ Z_{s'}=Z_s $.

 In consequence, every invertible endomorphism $ \phi $ such that 
 $ \phi \neq \lambda \cdot \mathrm{id}_{\Theta_X} $ (if any) gives rise to a
 foliation $ [ s' = \phi( s ) ] \neq [ s ] $ with 
 $ Z_{s'}=Z_s $.
\end{theorem}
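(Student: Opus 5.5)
The plan is to show the two inclusions $Z_{s'}\supset Z_s$ (given) and $Z_s\supset Z_{s'}$, the latter by applying Theorem \ref{Th:RigiditySurfaces} with the roles of $s$ and $s'$ reversed, or more directly by a local computation with the inverse endomorphism. The existence of $\phi$ with $s'=\phi(s)$ is supplied by Theorem \ref{Th:RigiditySurfaces}, since $d>\lambda+d_c-n$ and $Z_{s'}\supset Z_s$; we simply use it.

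The key step is local. Cover $X$ by open sets $V_i$ trivializing $\Theta_X$ and $i^\ast\mathcal O_{\PP^n}(d-1)$. On $V_i$, $s$ is a vector field with components $(a_i^1,a_i^2)$, and $\phi$ is a $2\times 2$ matrix $A_i$ of holomorphic functions. Then $s'$ has components $A_i\,(a_i^1,a_i^2)^T$, so the ideal $\mathcal I_{Z_{s'}}$ is generated by the entries of $A_i(a_i^1,a_i^2)^T$, which are $\mathcal O(V_i)$-linear combinations of $a_i^1,a_i^2$. This recovers $\mathcal I_{Z_{s'}}\subset \mathcal I_{Z_s}$, i.e. $Z_{s'}\supset Z_s$. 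If $\phi$ is invertible, $A_i$ is invertible over $\mathcal O(V_i)$, with inverse the matrix of $\phi^{-1}$. Hence $(a_i^1,a_i^2)^T=A_i^{-1}\,(\text{components of } s')$, giving $\mathcal I_{Z_s}\subset\mathcal I_{Z_{s'}}$. Gluing over the cover yields $\mathcal I_{Z_s}=\mathcal I_{Z_{s'}}$, so $Z_{s'}=Z_s$. As a byproduct, $s'$ also has isolated singularities.

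For the consequence, let $\phi$ be invertible with $\phi\neq\lambda\cdot\mathrm{id}_{\Theta_X}$ and set $s'=\phi(s)$. We have $s'\neq 0$ because $\phi$ is injective on sections, and the first part gives $Z_{s'}=Z_s$. It remains to show $[s']\neq[s]$. Suppose instead $s'=\mu s$ for some $\mu\in\CC^\ast$. Then $\phi(s)=(\mu\,\mathrm{id})(s)$, i.e. $(i_s\otimes\mathrm{id})^0(\phi-\mu\,\mathrm{id})=0$. The proof of Theorem \ref{Th:RigiditySurfaces} shows that $(i_s\otimes\mathrm{id})^0$ is injective in this range, since $H^0(X,\Theta_X\otimes i^\ast\mathcal O_{\PP^n}(\lambda-d-n))=0$ for $d>\lambda+d_c-n$ via \eqref{fundamental}. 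Hence $\phi=\mu\,\mathrm{id}$, a contradiction.

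The main obstacle is bookkeeping rather than substance. One must check that the local matrix of $\phi^{-1}$ really gives generators in the required direction, which is immediate. The injectivity of $(i_s\otimes\mathrm{id})^0$ also needs to be cited from the proof of Theorem \ref{Th:RigiditySurfaces}, covering both the case $n\ge4$ and the case $n=3$ via Lemma \ref{Lemma:n3} and \eqref{eq:Injectiven3}. This injectivity is exactly where the hypothesis $d>\lambda+d_c-n$ enters.
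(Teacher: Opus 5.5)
Your proof is correct and follows the paper's argument: on each trivializing open set, the identity $A_i\,(a_i^1,a_i^2)^T=(\text{components of } s')$ together with the invertibility of $A_i$ gives $\mathcal I_{Z_{s'}}=\mathcal I_{Z_s}$, and these equalities glue to $Z_{s'}=Z_s$. For the consequence, you show $[\phi(s)]\neq[s]$ using the injectivity of $(i_s\otimes\mathrm{id})^0$, i.e.\ the uniqueness clause of Theorem~\ref{Th:RigiditySurfaces}; this correctly fills in a step the paper leaves implicit, but you should drop the suggested alternative of applying Theorem~\ref{Th:RigiditySurfaces} with $s$ and $s'$ swapped, since that would presuppose $Z_s\supset Z_{s'}$ and is circular.
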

\begin{proof}
To simplify the notation, let $ Z' $ and $ Z $ denote $ Z_{s'} $ and $ Z_s $ respectively.
Let $ s' $ be a global section such that $ Z' \supset Z $ and let
$ \phi \in H^{0}(X, \mathrm{End}(\Theta_X)) $ be the invertible endomorphism such
that $ s'= \phi(s) $. Consider an open cover $ \{ U_i\}_{i \in I } $ which trivializes 
the tangent bundle $ \Theta_X $. For each $ i \in I $, the restrictions
of $ s $, $ s' $ and $ \phi $ to $ U_i $ are, respectively, vector fields 
$ X $, $ X' $
and a $2\times 2$ matrix $ \phi_i = \phi|_{U_i} $
satisfying $ \phi_i \cdot X = X' $. This is: 
\begin{equation*}
 \begin{pmatrix}
     a(x) & b(x) \\
     c(x) & d(x)  \end{pmatrix} 
  \begin{pmatrix}
         X_1(x)\\
         X_2(x)
     \end{pmatrix} =
     \begin{pmatrix}
         X'_1(x)\\
         X'_2(x)
     \end{pmatrix}, \quad{ x\in U_i } .  
\end{equation*}
It is easy to see that the equation above is \textit{equivalent} to
\[ {\mcI_{Z'}}_{|_{U_i}} = \langle X'_1(x), X'_2(x) \rangle \subset 
 \langle X_1(x), X_2(x) \rangle = {\mcI_{Z}}_{|_{U_i}}. \]
On the other hand, since $ \phi_i $ is invertible, we also have 
$ \phi_i^{-1} \cdot X' = X $ and hence 
$ {\mcI_{Z'}}_{|_{U_i}} = {\mcI_{Z}}_{|_{U_i}}.$

Since the last equality holds for every $ i\in I $, the conclusion is
that $ Z' = Z $.
\end{proof}

\section{Global Endomorphisms of the Tangent Bundle of X}\label{Sec:GlobalEnd}
Theorem \ref{Th:RigiditySurfaces} shows that the problem of determining whether a foliation is determined by its singular scheme is governed by the space of global endomorphisms of the tangent bundle. We now study some properties of this space, for smooth projective complete intersection surfaces.

Let $\phi \in H^0(X, \mathrm{End}(\Theta_X))$ be a global endomorphism of the tangent bundle. For each point $p \in X$, the induced linear map $\phi_p:T_p X \to T_p X$ has characteristic polynomial 
$$P_p(x)=x^2-\mathrm{tr}(\phi_p)x+\mathrm{det}(\varphi_p),$$
where $\mathrm{tr}(\phi_p), \mathrm{det}(\phi_p) \in H^0(X, \mathcal{O}_X)=\CC$. Therefore, both $\mathrm{tr}(\phi)$ and $\mathrm{det}(\phi_p)$ are constants, and the characteristic polynomial of $\phi_p$ is independent of $p$. We denote it by
\begin{equation}\label{eq:CharPoly}
    P(x)=x^2-\mathrm{tr}(\phi)x+\mathrm{det}(\phi).   
\end{equation}
\begin{proposition}\label{Prop:TXSplits}
    Let $X$ be a smooth connected compact surface. Then there exists a global endomorphism $\phi \in H^0(X, \mathrm{End}(\Theta_X))$ with two distinct eigenvalues, if and only if $\Theta_X \cong L_1 \oplus L_2$, for some line bundles $L_1$ and $L_2$ on $X$.
\begin{proof}
    Assume that $\phi$ has two distinct eigenvalues, then its characteristic polynomial \eqref{eq:CharPoly} factors as
    $$P(x)=(x-\lambda)(x- \mu)$$
    with $\lambda \neq \mu$. For every point $p \in X$, the endomorphism $\phi_p$ has characteristic polynomial $P(x)$ and it is therefore diagonalizable with eigenvalues $\lambda$ and $\mu$. Hence 
    \begin{equation}\label{eq:isomfibers}
        \Theta_{X,p} = \mathrm{ker}(\phi_p- \lambda I_{\Theta_{X,p}}) \oplus \mathrm{ker}(\phi_p- \mu I_{\Theta_{X,p}}),
    \end{equation}
    and both eigenspaces have dimension one. Therefore, the morphisms 
    $$\phi - \lambda \cdot I_{\Theta_X}: \Theta_X \to \Theta_X, \quad \phi - \mu \cdot I_{\Theta_X}: \Theta_X \to \Theta_X$$
    have constant rank. Therefore $L_{\lambda}=\mathrm{ker}(\phi - \lambda \cdot I_{\Theta_X})$ and $L_{\mu}=\mathrm{ker}(\phi - \mu \cdot I_{\Theta_X})$ are line subbundles of $\Theta_X$. Moreover, the natural morphism $L_{\lambda} \oplus L_{\mu} \to \Theta_X$ is an isomorphism on every fiber by \eqref{eq:isomfibers}, concluding that
    $$\Theta_X \cong L_{\lambda} \oplus L_{\mu}.$$
    Conversely, suppose that $\Theta_X$ has the splitting $\Theta_X \cong L_1 \oplus L_2$. Let $p_1: \Theta_X \to L_1$ and $i_1: L_1 \to \Theta_X$, denote the projection and inclusion morphisms, respectively. Define $\pi=i_1 \circ p_1: \Theta_X \to \Theta_X$. For $v=(v_1, v_2) \in \Theta_X$ with $v_1 \in L_1$ and $v_2 \in L_2$, we have
    $$\pi(v_1, v_2)= i_1(p_1(v_1, v_2))=i_1(v_1)=(v_1, 0).$$
    Since $\pi(v_1,0)=(v_1,0)$ and $\pi(0,v_2)=0(0, v_2)$, then $\pi \in H^0(X, \mathrm{End}(\Theta_X))$ is a global endomorphism with eigenvalues $1$ and $0$. 
\end{proof}
\end{proposition}
The next result describes the space of global endomorphisms of the tangent bundle when the latter splits as a direct sum of line bundles. This will be used repeatedly in the examples and applications that follow.
\begin{theorem}\label{Thm:TXSplitEndo}
    Let $X$ be a smooth connected compact surface. If $\Theta_X \cong L_1 \oplus L_2$, then
    $$H^0(X, \mathrm{End}(\Theta_X)) \cong \CC \oplus H^0(X,L_1 \otimes L_2^{\vee}) \oplus H^0(X,L_2 \otimes L_1^{\vee}) \oplus \CC.$$
    In other words, every global endomorphism of $\Theta_X$ is represented by a matrix
    $$
    \left( \begin{array}{cc}
    a & \alpha \\
    \beta & b
    \end{array} \right),
    $$
    where $a,b \in \CC$, $\alpha \in H^0(X, L_1 \otimes L_2^{\vee})$ and $\beta \in H^0(X, L_2 \otimes L_1^{\vee})$.
    \begin{proof}
        Since $\Theta_X \cong L_1 \oplus L_2$,
        \begin{align*}
            H^0(X, \mathrm{End}(\Theta_X))&\cong H^0(X,\Theta_X \otimes \Theta_X^{\vee})\\
            &\cong H^0(X,(L_1 \oplus L_2) \otimes (L_1 \oplus L_2)^{\vee})\\
            &\cong H^0(X,L_1 \otimes L_1^{\vee}) \oplus H^0(X,L_1 \otimes L_2^{\vee}) \oplus H^0(X,L_2 \otimes L_1^{\vee}) \oplus H^0(X,L_2 \otimes L_2^{\vee})\\
            &\cong \CC \oplus H^0(X,L_1 \otimes L_2^{\vee}) \oplus H^0(X,L_2 \otimes L_1^{\vee}) \oplus \CC, 
        \end{align*}
        where the last equality is because 
        $$H^0(X,L_1 \otimes L_1^{\vee}) \cong H^0(X,L_2 \otimes L_2^{\vee}) \cong H^0(X, \mathcal{O}_X)\cong \CC,$$
        since $X$ is a connected compact surface.
    \end{proof}
\end{theorem}
We now illustrate Theorem \ref{Thm:TXSplitEndo} in two classical situations.
\begin{example}\label{Ex:Abelian}
    If $X$ is an abelian surface, then $\Theta_X \cong \mathcal{O}_X \oplus \mathcal{O}_X$ and $H^0(X, \mathrm{End}(\Theta_X)) \cong \CC^4$. In other words, every global endomorphism of $\Theta_X$ is represented by
    $$
    \left( \begin{array}{cc}
    a & b \\
    c & d
    \end{array} \right), \quad a,b,c,d \in \CC.
    $$
\end{example}
\begin{example}\label{Ex:P1P1}
    Consider $X= \PP^1 \times \PP^1$. The Euler sequence on $\PP^1$ given by
    $$0 \to \mathcal{O}_{\PP^1} \to \mathcal{O}_{\PP^1}(1)^{\oplus 2} \to \Theta_{\PP^1} \to 0,$$
    implies that $c_1(\Theta_{\PP^1})=2$ and hence $\Theta_{\PP^1} \cong \mathcal{O}_{\PP^1}(2)$. Therefore
    $$\Theta_X \cong p_1^{\ast}\Theta_{\PP^1} \oplus p_2^{\ast}\Theta_{\PP^1} \cong p_1^{\ast} \mathcal{O}_{\PP^1}(2) \oplus p_2^{\ast} \mathcal{O}_{\PP^1}(2),$$
    see \cite[Ex 8.3 a), p.187]{Hartshorne} for a reference. By Theorem \ref{Thm:TXSplitEndo} we have that
    $$H^0(X, \mathrm{End}(\Theta_X)) \cong \CC \oplus H^0(X, p_1^{\ast}\mathcal{O}_{\PP^1}(2) \otimes p_2^{\ast}\mathcal{O}_{\PP^1}(-2)) \oplus H^0(X, p_1^{\ast}\mathcal{O}_{\PP^1}(-2) \otimes p_2^{\ast}\mathcal{O}_{\PP^1}(2)) \oplus \CC.$$
    By Kunneth formula we have that 
    $$H^0(X, p_1^{\ast}\mathcal{O}_{\PP^1}(2) \otimes p_2^{\ast}\mathcal{O}_{\PP^1}(-2)) \cong H^0(\PP^1,\mathcal{O}_{\PP^1}(2)) \otimes H^0(\PP^1,\mathcal{O}_{\PP^1}(-2)) \cong 0,$$
    and similarly $H^0(X, p_1^{\ast}\mathcal{O}_{\PP^1}(-2) \otimes p_2^{\ast}\mathcal{O}_{\PP^1}(2)) \cong 0$. Hence $H^0(X, \mathrm{End}(\Theta_X)) \cong \CC^2  $. In other words, every global endomorphism of $\Theta_X$ is represented by
    \begin{equation}\label{eq:MatrixQ}
        \left( \begin{array}{cc}
    a & 0 \\
    0 & b
    \end{array} \right), \quad a,b \in \CC.
    \end{equation}
    This computation provides an alternative proof of Theorem 4.1 of \cite{Hirzebruch} in the case $\delta=0$.
\end{example}

\begin{remark}\label{rmk:QQuadricP}
Every smooth irreducible quadric in $\PP^3$ is isomorphic to 
$Q=V(xy-zw) \subset \PP^3$, and $Q \cong \PP^1 \times \PP^1$, by 
\cite[Ex 2.15 a), p.13]{Hartshorne}. Example \ref{Ex:P1P1} shows that every global endomorphism of $\Theta_Q$ is represented by a matrix of the form \eqref{eq:MatrixQ}.
By Theorem \ref{Th:RigiditySurfaces}, for $d > 1$
there do exist different foliations on $Q$ with isolated singularities 
and tangent sheaf $i^{\ast}\mathcal{O}_{\PP^3}(1-d)$ with the same singular scheme.
\end{remark}

\begin{remark}
A straightforward computation using formula 
\eqref{CanonicalBundle} shows that the only smooth projective complete intersection surfaces with ample anticanonical bundle (namely, the complete intersection \emph{Del Pezzo} surfaces) are $\PP^1\times\PP^1$, the smooth cubic surfaces in $\PP^3$, and the smooth complete intersections of type $(2,2)$ in $\PP^4$.
    
The first case was discussed in Example \ref{Ex:P1P1}
and Remark \ref{rmk:QQuadricP}. The remaining two cases are covered by \cite[Théorème]{Fah} and were addressed in Corollaries \ref{Cor:EndoHyperP3} and \ref{Cor:Endo22}. Therefore, we have just given
a precise description of the spaces $H^0(X, \mathrm{End}(\Theta_X))$ 
for all smooth projective complete intersection Del Pezzo surfaces $ X $.
\end{remark}

\begin{proposition}\label{Prop:NoSplit}
Let $X \subset \PP^n$ be a smooth projective complete intersection surface with $\mathrm{Pic}(X) \cong \ZZ$. Then the tangent bundle $\Theta_X$ does not split as a direct sum of two line bundles.
\begin{proof}
Suppose that $\Theta_X$ splits and $\mathrm{Pic}(X) \cong \ZZ$, then 
$$\Theta_X \cong \mathcal{O}_X(a) \oplus \mathcal{O}_X(b),$$
for some $a,b \in \ZZ$, then 
\begin{equation}\label{eq:Chern2}
c_1(\Theta_X)= (a+b)H \quad  c_2(\Theta_X)= abH^2.
\end{equation}
Comparing \eqref{eq:Chern2} with the Chern classes of $\Theta_X$ computed 
during
the proof of Theorem \ref{eq:NumberofSing}, we obtain
\begin{equation}\label{eq:Chern3}
\begin{split}
    a+b&= n+1-\sum_{i=1}^c d_i, \quad and\\
    ab&= \binom{n+1}{2}-(n+1)\sum_{i=1}^c d_i +\sum_{i=1}^c d_i^2+\sum_{i<j} d_id_j.
\end{split}
\end{equation}
Let
$$S:=\sum_{i=1}^c d_i \quad T:=\sum_{i=1}^c d_i^2.$$
Then $a$ and $b$ are roots of the polynomial
$$R(x)=(x-a)(x-b)=x^2-(n+1-S)x+\left( \binom{n+1}{2}-(n+1)S+T+\sum_{i<j}d_i d_j \right).$$
The discriminant of $R$ is given by
\begin{equation*}
    \begin{split}
        \Delta_R =& (n+1-S)^2-4\left(\binom{n+1}{2}-(n+1)S+T+\sum_{i<j}d_i d_j \right)\\
        =& -n^2+1+2(n+1)S-S^2-2T,
    \end{split}
\end{equation*}
by the identity $S^2=T+2\sum_{i<j}d_i d_j$.
Since $c=n-2$ and $d_i \geq 2$ for each $i=1, \ldots, c$, we have $S \geq 2(n-2)$. Moreover, $T \geq \frac{S^2}{c}$ by the
Cauchy-Schwarz inequality and hence 
$-2T \leq \frac{-2S^2}{n-2}$. Therefore
\begin{equation*}
    \begin{split}
        \Delta_R =& -n^2+1+2(n+1)S-S^2-2T\\
        \leq& -n^2+1+2(n+1)S-S^2-\frac{2S^2}{n-2}\\
        =&-\frac{n}{n-2}S^2+2(n+1)S+1-n^2.
    \end{split}
\end{equation*}
Let us fix $n \geq 3$, then 
$$U(S):=-\frac{n}{n-2}S^2+2(n+1)S+1-n^2$$
is a quadratic polynomial in $S$ with negative principal coefficient, whose maximum value is
$$-2\frac{n+1}{n}<0.$$
Hence
$\Delta_R \leq U(S)<0$ for every $n \geq 3$. It follows that $R$ has no real roots, contradicting the existence of the integers $a$ and $b$. Therefore $\Theta_X$ cannot split.
\end{proof}
\end{proposition}

By the Noether-Lefschetz theorem, a \emph{very general} smooth projective complete intersection surface has Picard rank one, except for quadrics in $\PP^3$, cubic surfaces in $\PP^3$, and complete intersections of two quadrics in $\PP^4$ (see \cite[Theorem 0.1]{VeryGeneral} for a reference), which were discussed above. Therefore, Proposition \ref{Prop:NoSplit} shows that, for a very general smooth projective complete intersection surface, the tangent bundle does not split as a direct sum of two line bundles.

The following proposition characterizes global endomorphisms of $\Theta_X$ having a unique eigenvalue.
\begin{proposition}\label{prop:Nilpot}
    Let $X$ be a smooth connected compact surface. $\phi \in H^0(X, \mathrm{End}(\Theta_X))$ has a unique eigenvalue $\lambda$, if and only if $\phi= \lambda I_{\Theta_X} +N$, where $N \in H^0(X, \mathrm{End}(\Theta_X))$ satisfies $N^2=0$.
\begin{proof}
    Assume that $\phi$ has only one eigenvalue, then its characteristic polynomial \eqref{eq:CharPoly} is
    $$P(x)=(x-\lambda)^2,$$
    for some $\lambda \in \CC$. For every point $p \in X$, the endomorphism $\phi_p$ has characteristic polynomial $P(x)$ and by Cayley-Hamilton theorem, $\phi_p$ satisfies its characteristic polynomial, that is
    $$P(\phi_p)=(\phi_p-\lambda I_{\Theta_{X,p}})^2=0.$$
    Setting $N=\phi-\lambda I_{\Theta_X}$, we obtain $(N^2)_p=(N_p)^2=0$ for every $p \in X$. Therefore $N^2=0$ and $\phi=\lambda I_{\Theta_X} +N$.

    Conversely, suppose that $\phi=\lambda I_{\Theta_X}+N$ with $N^2=0$. Since $(N_p)^2=0$ for every $p \in X$, then $ N_p \sim
\left(\begin{smallmatrix}
0 & 1\\
0 & 0
\end{smallmatrix}\right)
$ or $N_p=0$. Hence 
$$\phi_p= \lambda I_{\Theta_{X,p}} +N_p \sim \begin{pmatrix}
\lambda & 1\\
0 & \lambda \\
\end{pmatrix}
$$
or $ \phi_p \sim
\left(\begin{smallmatrix}
\lambda & 0\\
0 & \lambda
\end{smallmatrix}\right)
$. Since similar matrices have the same characteristic polynomial
$$P_p(x)=\mathrm{det}\begin{pmatrix}
x-\lambda & \ast\\
0 & x-\lambda \\
\end{pmatrix}=(x- \lambda)^2.$$
Therefore $\phi$ has only one eigenvalue.
\end{proof}
\end{proposition}

\begin{remark}
    If $N=0$, then $\phi=\lambda I_{\Theta_X}$ is a scalar multiple of the identity.
\end{remark}

\begin{corollary}\label{Cor:LambdaZero}
    Let $X$ be a smooth connected compact surface and let $\phi \in H^0(X, \mathrm{End}(\Theta_X))$. Then $\phi$ has $0$ as its unique eigenvalue if and only if $\phi^2=0$.
    \begin{proof}
        If $0$ is the unique eigenvalue, Proposition \ref{prop:Nilpot} gives $\phi=N$ and $\phi^2=N^2=0$. Conversely, assume that $\phi^2=0$. For every $p \in X$, the fiber map $\phi_p$ is a nilpotent map whose only eigenvalue is $0$, hence $0$ is the unique eigenvalue of $\phi$.
    \end{proof}
\end{corollary}

\begin{corollary}\label{Cor:Autom}
     Let $X$ be a smooth connected compact surface and let $\phi \in H^0(X, \mathrm{End}(\Theta_X))$. Assume that $\phi$ has a unique eigenvalue $\lambda$. Then $\lambda \neq 0$ if and only if $\phi$ is an automorphism of $\Theta_X$. In this case,
     $$\phi^{-1}=\frac{1}{\lambda} I_{\Theta_X}-\frac{1}{\lambda^2}N,$$
     where $\phi=\lambda I_{\Theta_X}+N$.
     \begin{proof}
         By Proposition \ref{prop:Nilpot}, 
         $$\phi=\lambda I_{\Theta_X}+N=\lambda \left(I_{\Theta_X}+\frac{1}{\lambda} N \right)$$ 
         with $N^2=0$. Since 
         $$\left(I_{\Theta_X}+\frac{1}{\lambda} N\right)\left(I_{\Theta_X}-\frac{1}{\lambda} N\right)=I_{\Theta_X}-\frac{1}{\lambda^2}N^2=I_{\Theta_X}.$$
         Therefore,
         $$\left(\frac{1}{\lambda} I_{\Theta_X}-\frac{1}{\lambda^2}N\right) \circ \phi= \phi \circ \left(\frac{1}{\lambda} I_{\Theta_X}-\frac{1}{\lambda^2}N\right)= I_{\Theta_X},$$
         showing that
         $$\phi^{-1}=\frac{1}{\lambda} I_{\Theta_X}-\frac{1}{\lambda^2}N.$$
         Hence $\phi$ is an automorphism of $\Theta_X$.

         Conversely, if $\phi$ is an automorphism, then each fiber map $\phi_p$ is invertible. Hence $0$ cannot be an eigenvalue of any $\phi_p$. Since $\phi$ has a unique eigenvalue, it must be nonzero.
     \end{proof}
\end{corollary}

\section*{Acknowledgments}
The second author gratefully acknowledges the support of the Centro de Investigación en Matemáticas, A.C., through a Research Stay Grant during the preparation of this manuscript.

\end{document}